\newcommand{\QQ}{\mathbb{Q}}
\newcommand{\arb}[1]{\includegraphics[height=5mm]{a#1.eps}}
\newcommand{\su}[1]{\widetilde{#1}}
\newcommand{\prelie}{\operatorname{PreLie}}
\newcommand{\PL}{\mathsf{PL}}
\newcommand{\PLh}{\widehat{\PL}}
\newcommand{\UPL}{{U}(\PL)}
\newcommand{\UPLh}{\widehat{U}(\PL)}
\newcommand{\pl}{\curvearrowleft}
\newcommand{\pun}{\arb{0}}
\newcommand{\corol}{\mathtt{Crl}}
\newcommand{\linear}{\mathtt{Lnr}}
\newcommand{\fork}{\mathtt{Frk}}
\newcommand{\maj}{\operatorname{maj}}
\newcommand{\dend}{\operatorname{Dend}}
\newcommand{\dendh}{\widehat{\operatorname{Dend}}}
\newcommand{\dun}{\includegraphics[height=3mm]{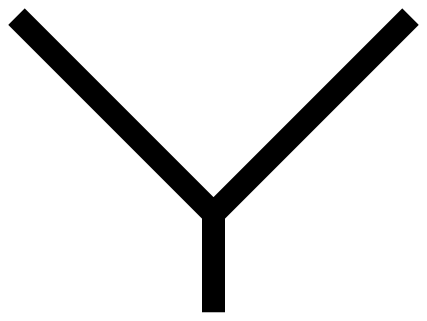}}
\newcommand{\Id}{\operatorname{Id}}
\newcommand{\aut}{\operatorname{aut}}
\newcommand{\nsym}{\textbf{Sym}}
\newcommand{\dessus}[2]{\genfrac{}{}{0mm}{1}{#1}{#2}}
\newcommand{\qbinom}[2]{\genfrac{[}{]}{0mm}{1}{#1}{#2}_q}
\renewcommand{\phi}{\varphi}
\newtheorem{theorem}{Theorem}[section] 
\newtheorem{proposition}[theorem]{Proposition}
\newtheorem{lemma}[theorem]{Lemma}
\newenvironment{proof}{\begin{trivlist}\item{\bf{Proof.}}}
  {\hfill\rule{2mm}{2mm}\end{trivlist}}
\title{A rooted-trees $q$-series lifting a one-parameter family of Lie
  idempotents} 
\author{F. Chapoton} \date{\today}
\begin{document}

\maketitle
\begin{abstract}
  We define and study a series indexed by rooted trees and with
  coefficients in $\QQ(q)$. We show that it is related to a family of
  Lie idempotents. We prove that this series is a $q$-deformation of a
  more classical series and that some of its coefficients are Carlitz
  $q$-Bernoulli numbers.
\end{abstract}

\section{Introduction}

The aim of this article is to introduce and study a series $\Omega_q$
indexed by rooted trees, with coefficients that are rational functions
of the indeterminate $q$.

The series $\Omega_q$ is in fact an element of the group $G_\PL$ of
formal power series indexed by rooted trees, which is associated to
the $\prelie$ operad by a general functorial construction of a group
from an operad \cite{Chap_2002,vdlaan,Chap_SLC,ChapLive2}. As there is
an injective morphism of operads from the $\prelie$ operad to the
dendriform operad, there is an injection of groups from $G_{\PL}$ to
the group $G_{\dend}$, which is a group of formal power series indexed
by planar binary trees. This means that each series indexed by rooted
trees can be mapped to a series indexed by planar binary trees, in a
non-trivial way.

There is a conjectural description of the image of this injection of
groups (see \cite[Corollary 5.4]{mould}). This can be stated roughly
as the intersection in a bigger space (spanned by permutations) of the
dendriform elements with the Lie elements. The inclusion of the image
in the intersection is known, but the converse is not.

One starting point of this article was the existence of a
one-parameter family of Lie idempotents belonging to the descent
algebras of the symmetric groups \cite{Duchamp_et_al_1994,ncsf2}. As
Lie idempotents, these are in particular Lie elements. As elements of
the descent algebras, these are also dendriform elements. Therefore,
according to the conjecture stated above, they should belong to the
image of $G_{\PL}$ in $G_{\dend}$.

Bypassing the conjecture, we prove this by exhibiting an element
$\Omega_q$ of $G_\PL$ and then showing that its image is the expected
sum of Lie idempotents.

We then obtain several results on $\Omega_q$. First, we prove that the
series $\Omega_q$ has only simple poles at non-trivial roots of unity
and in particular, can be evaluated at $q=1$. Then we show that
$\Omega_q$ is a $q$-deformation of a classical series $\Omega$ which
is its value at $q=1$. We also compute the value at $q=0$ and the
appropriate limit value when $q=\infty$.

We then consider the images of $\Omega_q$ in some other groups. There
are two morphisms of groups from $G_\PL$ to usual groups of formal
power series in one variable. Looking at corollas only, one gets a map
from $G_{\PL}$ to the group of formal power series with constant term
$1$ for multiplication. The image of $\Omega_q$ is then the generating
function of the $q$-Bernoulli numbers introduced by Carlitz, that
appear quite naturally here.

On the other hand, looking at linear trees only, one gets a map from
$G_{\PL}$ to the composition group of formal power series without
constant term. The image of $\Omega_q$ is then a $q$-logarithm.

\medskip

The present work received support from the ANR grant
BLAN06-1\underline{ }136174.

Many useful computations and checks have been done using MuPAD.

\section{General setting}

We will work over the field $\QQ$ of rational numbers and over the field
$\QQ(q)$ of fractions in the indeterminate $q$.

We have tried to avoid using operads as much as possible, but this
language is needed at some points in this article. The reader may
consult \cite{loday,Chap_SLC} as references. The symbol $\circ$ will
denote the composition in an operad and the symbols $\flat$ and
$\natural$ will serve to note the place where composition is done.

\subsection{Pre-Lie algebras}

Recall (see for instance \cite{ChapLive1}) that a \textbf{pre-Lie
  algebra} is a vector space $V$ endowed with a bilinear map $\pl$
from $V \otimes V$ to $V$ satisfying the following axiom:
\begin{equation}
  \label{axiomePL}
  (x \pl y) \pl z -x \pl (y \pl z)= (x \pl z) \pl y -x \pl (z \pl y).
\end{equation}
This is sometimes called a right pre-Lie algebra.

The pre-Lie product $\pl$ defines a Lie bracket on $V$ as follows:
\begin{equation}
  [x,y]=x\pl y-y\pl x.
\end{equation}
One can easily check that the pre-Lie axiom (\ref{axiomePL}) implies
the Jacobi identity for the anti-symmetric bracket $[\,,\,]$.

The pre-Lie product $\pl$ can also be considered as a right action
$\pl$ of the associated Lie algebra $(V,[\,,\,])$ on the vector space
$V$. Indeed, one has
\begin{equation}
  (x \pl y) \pl z -(x \pl z) \pl y = x \pl [y , z].
\end{equation}
This should not be confused with the adjoint action of a Lie algebra
on itself.

\subsection{Free pre-Lie algebras}

The free pre-Lie algebras have a simple description using rooted
trees. Let us recall briefly this description and other properties.
Details can be found in \cite{ChapLive1}.

A \textbf{rooted tree} is a finite, connected and simply connected
graph, together with a distinguished vertex called the root. We will
picture rooted trees with their root at the bottom and orient
(implicitly) the edges towards the root. There are two distinguished
kinds of rooted trees: corollas (every vertex other than the root is
linked to the root by an edge) and linear trees (at every vertex,
there is at most one incoming edge), see Fig. \ref{fig:trees}. A
\textbf{forest of rooted trees} is a finite graph whose connected
components are rooted trees.

The free pre-Lie algebra $\PL(S)$ on a set $S$ has a basis indexed by
rooted trees decorated by $S$, \textit{i.e.} rooted trees together
with a map from their set of vertices to $S$.

The pre-Lie product $T \pl T'$ of a tree $T'$ on another one $T$ is
given by the sum of all possible trees obtained from the disjoint
union of $T$ and $T'$ by adding an edge from the root of $T'$ to one
of the vertices of $T$ (the root of the resulting tree is the root of
$T$).

In particular, we will denote by $\PL$ the free pre-Lie algebra on one
generator. This is the graded vector space $\PL=\oplus_{n\geq
  1}{\PL_n}$ spanned by unlabeled rooted trees, where the degree of a
tree $T$ is the number $\#T$ of its vertices. The pre-Lie product
obviously preserves this grading. We will denote by $\star$ the
associative product in the universal enveloping algebra $\UPL$ of the
Lie algebra $\PL$.

There exists a unique isomorphism $\psi$ of graded right $\PL$-modules
between the free right $\UPL$-module on one generator $g$ of degree
$1$ and the $\PL$-module $(\PL, \pl)$ such that $\psi$ maps the
generator $g$ to $\pun$, the unique rooted tree with one vertex.

This means that there is a commutative diagram as follows:
\begin{equation}
  \label{action_diagramme}
\xymatrix{
\QQ g \ar[d]^{\Id \otimes \star} \otimes \UPL \otimes \PL
\ar[r]^{\quad \, \psi
  \otimes \Id} & \PL \otimes \PL \ar[d]^{\pl}\\
\QQ g        \otimes \UPL  \ar[r]^{\psi} & \PL \\
}
.
\end{equation}

One can use the bijection $\psi$ and the canonical basis of $\PL$ to
get a canonical basis of the enveloping algebra $\UPL$ indexed by
forests of rooted trees. The degree of a forest $F$ is the number of
its vertices $\# F$. The image of the usual inclusion of $\PL$ in
$\UPL$ is the subspace spanned by rooted trees. In this basis of
$\UPL$, there is a nice combinatorial description of the associative
product $\star$. Let $F$ and $F'$ be forests in $\UPL$.  The product
$F \star F'$ is the sum of all possible forests, obtained from the
disjoint union of $F$ and $F'$ by the addition of some edges (possibly
none), each of these new edges going from some root of $F'$ to some
vertex of $F$.

There is a canonical projection $\pi$ from $\UPL$ to $\PL$, defined
using the canonical basis of $\UPL$ by projection on the subspace
spanned by rooted trees, annihilating the empty forest and all
forests that are not trees.

\begin{lemma}
  \label{pi_exp}
  Let $F$ be a forest in $\UPL$ and $T$ be a rooted tree in $\PL$.
  Then one has $\pi(F \star T)=\pi(F) \pl T$.
\end{lemma}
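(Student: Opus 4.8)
The plan is to argue by linearity and then appeal directly to the combinatorial descriptions of $\star$ and of $\pl$ recalled above. Both sides of the claimed identity are linear in $F$ (the product $\star$ is bilinear, $\pi$ is linear, and $G \mapsto \pi(G) \pl T$ is visibly linear), so it suffices to prove the identity when $F$ is a single basis element of $\UPL$, that is, a single forest of rooted trees.

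So fix such a forest $F$ and the rooted tree $T$. Since $T$ has exactly one root, the combinatorial rule for $\star$ shows that $F \star T$ is the sum of the disjoint union $F \sqcup T$ together with, for each vertex $v$ of $F$, the forest $F \cup_v T$ obtained by adding a single edge from the root of $T$ to $v$. Now I would apply $\pi$ and keep only the terms that are rooted trees. If $F$ is nonempty, the term $F \sqcup T$ has at least two connected components and is killed by $\pi$. For the remaining terms, the key observation is that attaching the single-rooted tree $T$ by at most one new edge to a vertex lying in one connected component of $F$ merges $T$ with that component but leaves the other components of $F$ untouched; hence $F \cup_v T$ is a rooted tree if and only if $F$ was already connected, i.e.\ $F$ is itself a single rooted tree.

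Two cases then remain. If $F$ is not a tree, every term of $F \star T$ is annihilated by $\pi$, so $\pi(F \star T) = 0$; on the other hand $\pi(F) = 0$ by definition of $\pi$, hence $\pi(F) \pl T = 0$ as well, and the identity holds. If $F$ is a rooted tree, then $\pi(F) = F$, the surviving terms of $F \star T$ are precisely the trees $F \cup_v T$ with $v$ ranging over the vertices of $F$, and their sum is by definition the pre-Lie product $F \pl T$; thus $\pi(F \star T) = F \pl T = \pi(F) \pl T$.

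There is no serious obstacle here: the whole content is the bookkeeping of connected components in the previous paragraph — namely that joining the single-rooted $T$ to a forest by one extra edge yields a connected graph only when the forest was connected — which is exactly what identifies the terms surviving $\pi$ with those of $F \pl T$.
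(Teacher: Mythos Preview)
Your proof is correct and follows exactly the same line as the paper's own argument: reduce to a basis forest $F$, note that the disjoint union $F\sqcup T$ is never a tree while the grafted terms $F\cup_v T$ are trees precisely when $F$ was already a single tree, and identify the surviving sum with $\pi(F)\pl T$. The paper compresses this into two sentences, but the content is identical.
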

\begin{proof}
  If $F$ is not a tree, then each term of $F \star T$ is not a tree,
  therefore both sides vanish. If $F=\pi(F)$ is a tree, then $F \star
  T$ is the sum of $\pi(F) \pl T$ with the disjoint union of $F$ and
  $T$. Therefore $\pi(F \star T)=\pi(F) \pl T$.
\end{proof}

\begin{lemma}
  \label{injection}
  For all $n\geq 1$, the maps $T \mapsto \pun \pl T$ and $T \mapsto T
  \pl \pun$ are injective from $\PL_n$ to $\PL_{n+1}$.
\end{lemma}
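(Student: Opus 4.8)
The plan is to treat the two maps separately, since they behave quite differently.

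For the map $T \mapsto \pun \pl T$, the point is that $\pun$ has a single vertex, so $\pun \pl T$ is not a genuine sum but a \emph{single} rooted tree: the tree obtained from $T$ by adjoining a new root whose unique child is the old root of $T$. Hence $T \mapsto \pun \pl T$ defines an injection from the set of rooted trees with $n$ vertices into the set of rooted trees with $n+1$ vertices — one recovers $T$ simply by erasing the new root and its edge. A linear map sending a basis to a family of pairwise distinct basis vectors is injective, so the first assertion follows immediately.

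For the map $T \mapsto T \pl \pun$, which grafts a new leaf at an arbitrary vertex of $T$, I would argue by triangularity. Attach to each rooted tree $T$ the integer $\mu(T) = \sum_{v} d(v)$, where $v$ runs over the vertices of $T$ and $d(v)$ is the number of edges on the path from the root to $v$; this is an isomorphism invariant. For a rooted tree $T$, write $T^{+}$ for the tree obtained by grafting one new leaf directly onto the root of $T$. Then $T^{+}$ is one of the terms of $T \pl \pun$ (the one attaching the new leaf at the root), so it occurs in $T \pl \pun$ with coefficient at least $1$. The heart of the matter is the claim: if $T^{+}$ occurs in $T' \pl \pun$ for some tree $T' \ne T$, then $\mu(T') < \mu(T)$. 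To see this, note that $T^{+}$ occurs in $T' \pl \pun$ exactly when $T'$ can be obtained from $T^{+}$ by deleting one leaf. Deleting the new leaf — or, by symmetry, deleting any leaf of $T^{+}$ that is a child of the root — gives back $T$; deleting instead a leaf $\ell$ of $T^{+}$ at depth $\ge 2$ yields a tree $T'$ with $\mu(T') = \mu(T^{+}) - d(\ell) = \mu(T) + 1 - d(\ell) \le \mu(T) - 1$. Since $T' \ne T$, only the latter case can occur, which proves the claim. Granting it, suppose $\sum_{T} c_{T}\,(T \pl \pun) = 0$ with not all $c_{T}$ equal to $0$, and choose $T_{0}$ with $\mu(T_{0})$ minimal among the trees $T$ with $c_{T} \ne 0$. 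Extracting the coefficient of $T_{0}^{+}$ from both sides: every contribution other than the one coming from $T = T_{0}$ vanishes (by the claim together with the minimality of $\mu(T_{0})$), so $c_{T_{0}}$ times a positive integer equals $0$, a contradiction.

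The first map is a tautology; all the substance lies in the second. There, the one delicate point is the bookkeeping of the interchangeable leaf-children of the root, which is precisely what ensures that the inequality $\mu(T') < \mu(T)$ is \emph{strict} in exactly the cases with $T' \ne T$. Once the statistic $\mu$ and the distinguished term $T^{+}$ are in hand, the triangularity argument itself is routine.
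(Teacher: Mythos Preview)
Your proof is correct. For the first map your argument coincides with the paper's. For the second map, however, you take a genuinely different route.

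The paper's argument for $T \mapsto T \pl \pun$ is purely algebraic: via the isomorphism $\psi$ of right $\PL$-modules between $\QQ g \otimes \UPL$ and $\PL$ (the commutative diagram~(\ref{action_diagramme})), the map $x \mapsto x \pl \pun$ on $\PL$ corresponds to right $\star$-multiplication by $\pun$ in $\UPL$; since enveloping algebras are integral domains, this is injective. Your triangularity argument via the total-depth statistic $\mu$ and the distinguished summand $T^{+}$ avoids any appeal to the structure theory of enveloping algebras and works entirely inside the combinatorics of rooted trees. This makes your proof more self-contained and elementary, at the cost of a little extra length. The paper's approach is slicker and generalizes without change: right $\pl$-multiplication by \emph{any} nonzero element of $\PL$ is injective, not just by $\pun$.
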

\begin{proof}
  This is obvious for the first map, which is even an injection on the
  set of rooted trees. For the second map, this follows from the fact
  that enveloping algebras are integral domains, by restriction of
  the commutative diagram (\ref{action_diagramme}).
\end{proof}

In the sequel, we will always work in the completed vector space
$\PLh=\prod_{n\geq 1}{\PL_n}$ and with its completed enveloping
algebra $\UPLh$. All the results above are still true in this setting.

There is a group associated to each operad, see
\cite{Chap_2002,vdlaan,Chap_SLC,ChapLive2}. We will need the group
$G_\PL$ associated to the $\prelie$ operad. Its elements are the
elements of $\PLh$ whose homogeneous component of degree $1$ is
$\pun$. Product in $G_\PL$ is defined using the composition of the
$\prelie$ operad and $\pun$ is the unit in $G_\PL$. This group is
contained in the bigger monoid $\PLh$, on which it therefore acts on
the right and on the left. The right action respects all the
operations on $\PLh$ induced by the product $\pl$, including the
product and the action of $\UPLh$.

Let us now introduce a special element of $G_{\PL}$, for later use.
Let $\exp^* \in G_{\PL}$ be
\begin{equation}
  \label{defexp}
  \exp^* = \pun \pl \left((\exp(\pun)-1) / \pun\right).
\end{equation}
The series $\exp^*$ is very classical, and its coefficients
are known as the Connes-Moscovici coefficients (see \cite{Chap_2002}).

Let us consider the left action of $\exp^*$ on $\PLh$. Let $T$ be an
element of $\PLh$. Then $\exp^*(T)$ of $\PLh$ is defined by
\begin{equation}
  \exp^*(T)=\sum_{n \geq 1} \frac{1}{n!} ((T \pl T) \pl \dots ) \pl T,
\end{equation}
where there are $n$ copies of $T$ in the $n^{th}$ term. As $\exp^*$
belongs to the group $G_\PL$, the map $\exp^*$ defines a bijection
from $\PLh$ to itself.

Let us now relate the usual exponential map $\exp$ to the map
$\exp^*$.

Let $T$ be an element of $\PLh$. Let $\exp(T)$ be the exponential of
$T$ in $\UPLh$ (which is defined by the usual series and using the
$\star$ product). The map $\exp$ defines a bijection from $\PLh$ to
the set of group-like elements of $\UPLh$.

Therefore, the composite map $\exp^* \circ \exp^{-1}$ is a bijection
from the set of group-like elements in $\UPLh$ to $\PLh$. Let us show
that this composite map is just a restriction of the canonical
projection $\pi$.

\begin{proposition}
  \label{2groupes}
  Let $T$ be an element of $\PLh$. One has $\pi(\exp(T))=\exp^*(T)$.
\end{proposition}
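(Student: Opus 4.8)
The plan is to compare both sides degree by degree, exploiting the module structure encoded in the commutative diagram (\ref{action_diagramme}) together with Lemma \ref{pi_exp}. First I would expand $\exp(T)$ in $\UPLh$ as $\sum_{n\geq 0} \frac{1}{n!} T^{\star n}$, where $T^{\star n}$ denotes the $n$-fold $\star$-product of $T$ with itself. Applying $\pi$ and using linearity, $\pi(\exp(T)) = \sum_{n\geq 1} \frac{1}{n!}\,\pi(T^{\star n})$, since $\pi$ annihilates the degree-zero term (the empty forest). For $n\geq 1$ write $T^{\star n} = T^{\star(n-1)} \star T$; then Lemma \ref{pi_exp} gives $\pi(T^{\star(n-1)}\star T) = \pi(T^{\star(n-1)}) \pl T$. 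This is the crucial reduction: it converts a $\star$-product inside $\pi$ into a $\pl$-product outside.

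Next I would iterate. Since $\pi(T^{\star(n-1)}) = \pi(T^{\star(n-2)}\star T) = \pi(T^{\star(n-2)})\pl T$, and so on down to $\pi(T^{\star 1}) = \pi(T) = T$ (as $T\in\PLh$ is already a sum of trees), an easy induction shows $\pi(T^{\star n}) = ((T\pl T)\pl\cdots)\pl T$ with exactly $n$ copies of $T$, that is, the left-normed $\pl$-product of $n$ copies of $T$. Summing against $\frac{1}{n!}$ over $n\geq 1$ yields precisely the defining series of $\exp^*(T)$ displayed just before the statement. Hence $\pi(\exp(T)) = \exp^*(T)$.

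The only point requiring a little care — and the one I expect to be the main (mild) obstacle — is the passage to the completion: the sums $\exp(T)$ and $\exp^*(T)$ are infinite, so I must check that $\pi$ is continuous for the grading topology and commutes with the infinite sum, i.e. that working degree by degree is legitimate. This is immediate because $\pi$ is a graded map and, in each fixed degree $d$, only finitely many terms $T^{\star n}$ (namely those with $n\leq d$ when $T$ has no component in degree $0$; more precisely those whose support meets degree $d$) contribute, exactly as asserted in the remark that ``all the results above are still true'' in the completed setting. With that justification in place, the termwise identity $\pi(T^{\star n}) = ((T\pl T)\pl\cdots)\pl T$ established above finishes the proof. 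One should also note that the iteration implicitly uses the associativity-type bookkeeping only in the trivial left-normed form, so no appeal to the pre-Lie axiom (\ref{axiomePL}) is needed — the statement is really just a consequence of Lemma \ref{pi_exp} applied repeatedly.
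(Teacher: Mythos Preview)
Your proof is correct and follows essentially the same approach as the paper: both reduce to the identity $\pi(T^{\star n}) = ((T\pl T)\pl\cdots)\pl T$ obtained by iterating Lemma \ref{pi_exp}, and then sum. Your additional remarks on continuity in the completion are a nice explicit justification of what the paper leaves implicit.
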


\begin{proof}
  Let $F$ be in $\UPLh$. From Lemma \ref{pi_exp} above, one knows that
  $\pi(F \star T)$ is exactly $\pi(F) \pl T$. This implies that
  \begin{equation}
    \pi(T^{\star n})=((T \pl T) \dots )\pl T,
  \end{equation}
  for all $n \geq 1$, hence the result.
\end{proof}

\section{The classical case}

Let us start by recalling the definition of a classical element
$\Omega$ of $\PLh$ with rational coefficients. It was considered under
the name of $\log^*$ in \cite{Chap_2002} and has been since studied in
\cite{Murua,Wright_2003,ebrahimi-fard08,calaque-2008}.

\begin{proposition}
  There is a unique solution $\Omega$ in $\PLh_\QQ$ to the equation
  \begin{equation}
    \label{class_eq1}
    \pun \pl\left(\frac{\Omega}{\exp(\Omega)-1}\right) = \Omega,
  \end{equation}
  where $\frac{\Omega}{\exp(\Omega)-1}$ is in the completed enveloping
  algebra $\UPLh$.
\end{proposition}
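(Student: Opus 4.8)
The plan is to solve the fixed-point equation \eqref{class_eq1} degree by degree. Write $\Omega = \sum_{n \geq 1} \Omega_n$ with $\Omega_n \in \PL_n$; the normalization forced by the equation will be $\Omega_1 = \pun$. First I would make sense of the right-hand side of the defining relation: the element $\Omega/(\exp(\Omega)-1)$ is obtained by substituting $\Omega$ (which has zero constant term, so $\Omega_1 = \pun$ is the lowest term) into the power series $z/(e^z-1) = \sum_{k \geq 0} B_k z^k / k!$, the computation taking place in the completed enveloping algebra $\UPLh$ using the $\star$-product. Since $\Omega$ starts in degree $1$, each power $\Omega^{\star k}$ starts in degree $k$, so the substitution converges in $\UPLh$ and its degree-$n$ component depends only on $\Omega_1, \dots, \Omega_{n}$. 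Applying the operator $\pun \pl (-)$ then lands back in $\PLh$, degree $n$ of the output again depending on $\Omega_1,\dots,\Omega_n$.

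Next I would run the induction. Extracting the degree-$n$ component of \eqref{class_eq1}: the term $\pun \pl \Omega_{n-1}$ (coming from the $k=0$, $B_0 = 1$ contribution at the appropriate degree, i.e. from $\pun \pl \Omega$ hitting degree $n$) contributes $\pun \pl \Omega_{n-1}$ to degree $n$, while the left-hand side's degree-$n$ part equals $\Omega_n$ plus a correction built out of $\Omega_1, \dots, \Omega_{n-1}$ only. Wait --- I must be careful about which term carries the "new" unknown. The point is that $\Omega_n$ appears on the left-hand side of \eqref{class_eq1} only through the $k=0$ term $\pun \pl \Omega$, contributing $\pun \pl \Omega_{n}$ at degree $n+1$, not at degree $n$; so in fact the degree-$(n+1)$ component of the equation reads
\begin{equation}
  \pun \pl \Omega_n + (\text{terms in } \Omega_1,\dots,\Omega_{n-1}) = \Omega_{n+1},
\end{equation}
which expresses $\Omega_{n+1}$ in terms of lower data together with $\pun \pl \Omega_n$. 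Thus $\Omega_{n+1}$ is determined, and since $\pun \pl (-)$ is a well-defined map $\PL_n \to \PL_{n+1}$ (indeed injective by Lemma \ref{injection}, though injectivity is not even needed here, only well-definedness), the recursion produces a unique $\Omega_{n+1}$ at each stage. Starting from $\Omega_1 = \pun$, this builds a unique $\Omega \in \PLh_\QQ$ satisfying the equation.

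The main thing to get right --- and the only real subtlety --- is the bookkeeping of which component of $\Omega$ first appears at which total degree after applying the two operations (substitution into $z/(e^z-1)$ in $\UPLh$, then $\pun \pl (-)$), so that the recursion is genuinely triangular and solvable. Once that is pinned down, existence and uniqueness are immediate from the contracting nature of the recursion in the complete graded space $\PLh_\QQ$. One should also note that all the Bernoulli coefficients $B_k$ are rational, so the construction stays within $\PLh_\QQ$, which is why the statement is about $\PLh_\QQ$ rather than $\PLh$ over $\QQ(q)$.
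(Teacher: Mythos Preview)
Your approach is the same as the paper's: expand $\Omega/(\exp(\Omega)-1)$ via the Bernoulli series and observe that the degree-$n$ component of \eqref{class_eq1} expresses $\Omega_n$ purely in terms of $\Omega_1,\dots,\Omega_{n-1}$, yielding a direct recursion (and you are right that injectivity of $\pun\pl(-)$ plays no role here). One bookkeeping correction: the $k=0$ term of $\sum_{k\geq 0}(B_k/k!)\,\Omega^{\star k}$ is the constant $1$ (giving $\pun\pl 1=\pun$ in degree~$1$), not $\Omega$; the contribution $\pun\pl\Omega_n$ you isolate actually comes from $k=1$ with coefficient $B_1=-\tfrac12$, though this does not affect the triangularity of the recursion or the conclusion.
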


\begin{proof}
  Let us write $\Omega=\sum_{n\geq 1} \Omega_n$ where each $\Omega_n$
  is homogeneous of degree $n$. 

  Recall the Taylor expansion
  \begin{equation}
    \frac{x}{\exp(x)-1}=\sum_{k\geq 0} \frac{B_k}{k!} x^k,
  \end{equation}
  where the $B_k$ are the Bernoulli numbers.

  Then the homogeneous component of
  degree $n$ of
  equation (\ref{class_eq1}) is
  \begin{equation}
    \label{homog_class_eq1}
    \Omega_n= \sum_{k \geq 0} \frac{B_k}{k!} \sum_{\dessus{m_1\geq
      1,\dots,m_k\geq 1}{m_1+\dots+m_k=n-1}}
    ((\pun \pl \Omega_{m_k} ) \dots ) \pl \Omega_{m_1}.
  \end{equation}

  This gives a recursive definition of $\Omega_n$, which implies the
  existence and uniqueness of $\Omega$.
\end{proof}

Remark: one can use equation (\ref{homog_class_eq1}) to compute
$\Omega$ up to order $n$ in a $O(n^3)$ number of pre-Lie operations.

As the element $\frac{\Omega}{\exp(\Omega)-1}$ is invertible in the
completed enveloping algebra, equation (\ref{class_eq1}) is also equivalent to
the following equation:
\begin{equation}
  \label{class_eq2}
  \Omega \pl \left(\frac{\exp(\Omega)-1}{\Omega}\right) =\pun.
\end{equation}

One can interpret equation (\ref{class_eq2}) as follows.
\begin{proposition}
  \label{inverse_omega}
  The series $\Omega$ is the inverse of $\exp^*$ in the group $G_\PL$.
\end{proposition}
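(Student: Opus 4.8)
The plan is to unwind the definitions of the two relevant group elements and reduce the claim to the defining equation \eqref{class_eq2}. Recall that $\exp^* = \pun \pl \left((\exp(\pun)-1)/\pun\right)$ and that, by the commutative diagram \eqref{action_diagramme}, the left action of an element $\gamma \in G_\PL$ on $\PLh$ corresponds, under $\psi$, to right multiplication in $\UPLh$; concretely, composition in $G_\PL$ is governed by the identity $\psi(g \star a) = \psi(g) \pl a$ wrapped appropriately, so that for $\gamma$ of the form $\pun \pl (\text{series in }\UPLh)$ the product in $G_\PL$ can be read off by substituting one series into the other via the $\star$ product. I would first make this explicit: write $\exp^* = \pun \pl E$ with $E = (\exp(\pun)-1)/\pun \in \UPLh$, and observe that the inverse $\gamma$ of $\exp^*$ in $G_\PL$ is characterized by $\exp^* \circ \gamma = \pun$, i.e. by the equation obtained from substituting $\gamma$ for $\pun$ in the formula for $\exp^*$ in the operadic sense.

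The key step is to show that this substitution equation is precisely $\gamma \pl \bigl((\exp(\gamma)-1)/\gamma\bigr) = \pun$. For this I would use Proposition \ref{2groupes}, which gives $\pi(\exp(T)) = \exp^*(T)$, together with Lemma \ref{pi_exp}, $\pi(F \star T) = \pi(F) \pl T$. The point is that the operadic composition defining the group product in $G_\PL$, applied to elements of the special shape $\pun \pl (\cdots)$, is computed by taking the $\UPLh$-series $E$, replacing each occurrence of the generator-module element by $\gamma$, and acting on the left — and the resulting $\UPLh$-element is exactly $(\exp(\gamma)-1)/\gamma$ because $E$ is $(\exp(\pun)-1)/\pun$ and $\exp$ in $\UPLh$ is built from the $\star$-product, which is compatible with the module action by the right-action property of $G_\PL$ on the monoid $\PLh$ stated in the text. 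Thus $\exp^* \circ \gamma = \pun$ becomes $\gamma \pl \bigl((\exp(\gamma)-1)/\gamma\bigr) = \pun$, which is \eqref{class_eq2}. Since \eqref{class_eq2} is equivalent to \eqref{class_eq1}, whose unique solution is $\Omega$, we get $\gamma = \Omega$.

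The main obstacle I anticipate is making the first step — the identification of the group product in $G_\PL$ on elements of the form $\pun \pl(\text{series})$ with the substitution-into-$\UPLh$-series operation — genuinely rigorous rather than merely plausible, since the paper has deliberately suppressed the operadic composition formulas. I would handle this by working degree by degree: expand $\exp^* = \sum_n (\exp^*)_n$ and use that $G_\PL$ acts on $\PLh$ on the left, so $\exp^*(\Omega)$ is defined, and show $\exp^*(\Omega) = \pun$ directly from $\exp^*(\Omega) = \sum_{n\ge 1} \frac{1}{n!}((\Omega \pl \Omega)\pl\cdots)\pl\Omega = \pun \pl\bigl((\exp(\Omega)-1)/\Omega\bigr)$, where the last equality is the definition of the left action of $\exp^*$ unwound using $\psi$ and Proposition \ref{2groupes}; this quantity equals $\Omega \pl \bigl((\exp(\Omega)-1)/\Omega\bigr)\bigm/$ — more carefully, one rewrites using \eqref{class_eq1} that $\Omega/(\exp(\Omega)-1)$ sent through $\pun \pl -$ gives $\Omega$, hence its inverse series sent through gives back $\pun$. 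Equivalently, and more cleanly, $\exp^*(\Omega) = \pun$ is just \eqref{class_eq2} read through the formula for the left action of $\exp^*$, so the whole proof collapses to citing \eqref{class_eq2} once the action formula is in hand. A secondary point to check is that $\Omega \in G_\PL$, i.e. that its degree-$1$ part is $\pun$, which is immediate from \eqref{homog_class_eq1} at $n=1$.
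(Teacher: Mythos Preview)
Your approach is correct but different from the paper's. You compute the group product $\exp^* \cdot \Omega$ directly via the left-action formula $\exp^*(\Omega) = \sum_{n\ge 1}\frac{1}{n!}((\Omega\pl\Omega)\pl\cdots)\pl\Omega$, identify this (via the module diagram~\eqref{action_diagramme}) with $\Omega \pl\bigl((\exp(\Omega)-1)/\Omega\bigr)$, and then invoke~\eqref{class_eq2} to conclude it equals $\pun$. The paper instead acts on~\eqref{class_eq2} on the \emph{right} by $\Omega^{-1}$: since the right action of $G_\PL$ respects all operations induced by $\pl$ (including the $\star$-product in $\UPLh$), every occurrence of $\Omega$ in the left-hand side is replaced by $\pun$, yielding $\pun\pl\bigl((\exp(\pun)-1)/\pun\bigr) = \Omega^{-1}$, which is exactly the defining equation~\eqref{defexp} for $\exp^*$. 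The paper's argument is a one-liner because the structure-preservation of the right action is already recorded in the text; your route requires unpacking the left-action formula and matching it term-by-term to the $\UPLh$-series, which is precisely the obstacle you flag. Both are valid; the paper's trick of letting the right action do the substitution for you is worth noting, as it avoids having to make the operadic composition explicit.
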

\begin{proof}
  By right action by the inverse $\Omega^{-1}$ of $\Omega$ in $G_\PL$
  on (\ref{class_eq2}), one shows that $\Omega^{-1}$ satisfies the
  same equation (\ref{defexp}) as $\exp^*$.
\end{proof}

There is another equation for $\Omega$.

\begin{proposition}
  \label{uniqueness2}
  The series $\Omega$ is the unique non-zero solution in
  $\PLh_\QQ$ to the equation
  \begin{equation}
    \label{class_eq3}
    \Omega \pl (\exp(\Omega)-1)  =  \pun\pl\Omega,
  \end{equation}
  where $\exp(\Omega)-1$ is in the completed enveloping algebra $\UPLh$.
\end{proposition}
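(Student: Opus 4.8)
The plan is to derive equation~(\ref{class_eq3}) from equation~(\ref{class_eq2}) by right action in $G_\PL$, and then to establish uniqueness separately by a degree-by-degree argument. First I would start from~(\ref{class_eq2}), which reads $\Omega \pl \left((\exp(\Omega)-1)/\Omega\right) = \pun$. Since $\Omega \in G_\PL$, it has an inverse $\Omega^{-1}$ in the group, and the right action of $G_\PL$ on $\PLh$ respects all operations induced by $\pl$, including the product and the action of $\UPLh$. Acting on the right by $\Omega$ on~(\ref{class_eq2})---more precisely, applying the endomorphism of $\PLh$ given by right action of $\Omega$---sends $\pun$ to $\Omega$ on the right-hand side, and on the left-hand side it transports the $\UPLh$-element $(\exp(\Omega)-1)/\Omega$ through the action; because right action respects $\star$ and hence the exponential series, $(\exp(\Omega)-1)/\Omega$ gets carried to $(\exp(\Omega\cdot\Omega)-1)/(\Omega\cdot\Omega)$, where $\cdot$ is the group product. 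The subtlety here is which equation is acted upon and in what order; the cleanest route may actually be to act on~(\ref{class_eq1}) or~(\ref{class_eq2}) by $\Omega^{-1}$, using Proposition~\ref{inverse_omega} which identifies $\Omega^{-1}$ with $\exp^*$, so that the transported $\UPLh$-elements simplify via the defining relation~(\ref{defexp}) of $\exp^*$. I would organize the manipulation so that the pre-Lie element $(\exp(\Omega)-1)/\Omega$ and its ``inverse'' $\Omega/(\exp(\Omega)-1)$ recombine, after the group action, into the two sides of~(\ref{class_eq3}).

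An alternative and perhaps more transparent derivation: rewrite~(\ref{class_eq3}) as $\Omega \pl (\exp(\Omega)-1) = \pun \pl \Omega$, and observe that the right-hand side is, by~(\ref{class_eq1}) applied in reverse, related to $\Omega \pl \big(\exp(\Omega)-1\big)$ after multiplying~(\ref{class_eq1}) on the appropriate side by the $\UPLh$-element $(\exp(\Omega)-1)/\Omega$. Concretely, from $\pun \pl (\Omega/(\exp(\Omega)-1)) = \Omega$, acting on the right by the $\UPLh$-element $(\exp(\Omega)-1)$ (legitimate since right action respects the $\UPLh$-action on $\PLh$) turns the left side into $\pun \pl \Omega$ and the right side into $\Omega \pl (\exp(\Omega)-1)$, which is exactly~(\ref{class_eq3}). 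This shows $\Omega$ is a solution.

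For uniqueness, write any solution as $\Theta = \sum_{n \ge 1} \Theta_n$ with $\Theta_n$ homogeneous of degree $n$. Extracting the degree-$2$ component of~(\ref{class_eq3}): the left side contributes $\Theta_1 \pl \Theta_1$ (from the linear term of $\exp(\Omega)-1$) and the right side contributes $\pun \pl \Theta_1$, giving $\Theta_1 \pl \Theta_1 = \pun \pl \Theta_1$; since $\Theta_1 = c\,\pun$ for a scalar $c$ and $\pun \pl \pun$ is the unique tree with two vertices, this forces $c^2 = c$, so $c = 0$ or $c = 1$. The hypothesis ``non-zero'' rules out the branch where $\Theta_1 = 0$, provided one checks that $\Theta_1 = 0$ propagates to $\Theta = 0$: indeed, if $\Theta_1 = 0$ then the degree-$n$ component of~(\ref{class_eq3}) expresses $\pun \pl \Theta_{n-1}$ as a sum of pre-Lie products of strictly lower-degree components of $\Theta$, and by induction all $\Theta_n$ vanish, using Lemma~\ref{injection} (injectivity of $T \mapsto \pun \pl T$) at each step. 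Once $\Theta_1 = \pun$ is forced, the degree-$n$ component of~(\ref{class_eq3}) reads $\pun \pl \Theta_{n-1} = (\text{expression in } \Theta_1,\dots,\Theta_{n-1}\text{ on the left}) - (\text{lower-order terms on the right})$; rearranging, $\pun \pl \Theta_{n-1}$ equals a universal polynomial in $\Theta_1, \dots, \Theta_{n-2}$ plus terms already determined, and Lemma~\ref{injection} again lets us recover $\Theta_{n-1}$ uniquely.

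The main obstacle I anticipate is bookkeeping in the group-action derivation: one must be careful that right action by an element of $G_\PL$ does \emph{not} in general commute with left pre-Lie multiplication, so the transport of the $\UPLh$-factors through the action must be justified using precisely the stated compatibility (right action respects the product $\pl$, the associative product $\star$, and the $\UPLh$-action), and the order of the two sides matters. The second derivation (acting by the $\UPLh$-element $\exp(\Omega)-1$ on~(\ref{class_eq1})) sidesteps most of this, so I would present that one and relegate uniqueness to the recursive extraction above.
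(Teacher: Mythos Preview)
Your existence argument (the ``second derivation'') is correct and is exactly what the paper does: act on one of the equivalent equations by a suitable element of $\UPLh$, using that the right $\UPLh$-action satisfies $(x \pl u)\pl v = x \pl (u\star v)$. The paper acts on (\ref{class_eq2}) by $\Omega$; you act on (\ref{class_eq1}) by $\exp(\Omega)-1$. These are the same manipulation up to which side the invertible factor sits on.

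Your uniqueness argument, however, contains a concrete error in the recursive step once $\Theta_1=\pun$ is established. In the degree-$n$ component of (\ref{class_eq3}), the right-hand side is $\pun\pl\Theta_{n-1}$, while the left-hand side (the expansion of $\Theta\pl(\exp(\Theta)-1)$) contains \emph{two} terms involving $\Theta_{n-1}$: namely $\Theta_1\pl\Theta_{n-1}=\pun\pl\Theta_{n-1}$ (from $k=1$, $\ell=1$, $m_1=n-1$) and $\Theta_{n-1}\pl\Theta_1=\Theta_{n-1}\pl\pun$ (from $k=1$, $\ell=n-1$, $m_1=1$). The first of these cancels the right-hand side exactly, so after rearranging there is no $\pun\pl\Theta_{n-1}$ left to invert. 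What remains is
\[
\Theta_{n-1}\pl\pun \;=\; (\text{expression in }\Theta_1,\dots,\Theta_{n-2}),
\]
and the injectivity you need from Lemma~\ref{injection} is that of $T\mapsto T\pl\pun$, not of $T\mapsto \pun\pl T$. This is precisely how the paper argues. Your stated conclusion (``$\pun\pl\Theta_{n-1}$ equals a universal polynomial in $\Theta_1,\dots,\Theta_{n-2}$'') is therefore false as written, though the repair is immediate once you track the two top-degree terms correctly.

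Your handling of the branch $\Theta_1=0$ (forcing $\Theta=0$ by induction via injectivity of $T\mapsto\pun\pl T$) is fine and equivalent to the paper's ``first non-zero component'' contradiction.
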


\begin{proof}
  First, by right action on (\ref{class_eq2}) by $\Omega$, one can see
  that the unique solution $\Omega$ of (\ref{class_eq1}) is indeed a
  solution of (\ref{class_eq3}).

  Let us now prove uniqueness of a non-zero solution. Let $\Omega$ be
  any solution of (\ref{class_eq3}). Let us write $\Omega=\sum_{n\geq
    1} \Omega_n$ where each $\Omega_n$ is homogeneous of degree $n$.

  Then the homogeneous component of degree $n$ of equation
  (\ref{class_eq3}) is
  \begin{equation}
    \label{homog_class_eq3}
    \pun \pl \Omega_{n-1}=\sum_{k \geq 1} \frac{1}{k!} \sum_{\dessus{m_1\geq
      1,\dots,m_k\geq 1,\ell \geq 1}{m_1+\dots+m_k+\ell=n}}
    ((\Omega_{\ell} \pl \Omega_{m_{k}} ) \dots ) \pl \Omega_{m_1}.
  \end{equation}

  If $n=2$, this implies that $\Omega_1$ is either $0$ or $\pun$.

  Assume now that $\Omega$ is not zero. Let $d$ be the degree of the
  first non-zero homogeneous component $\Omega_d$ of $\Omega$. Assume
  that $d>1$. Then the equation (\ref{homog_class_eq3}) in degree
  $d+1$, together with Lemma \ref{injection}, gives that $\Omega_d=0$, a
  contradiction. Therefore necessarily, one has $d=1$ and
  $\Omega_1=\pun$.
 
  Let us look at the homogeneous component (\ref{homog_class_eq3}) in
  degree $n+1\geq 2$. The only terms involving $\Omega_n$ are $\pun
  \pl \Omega_n $ in the left-hand side and $\Omega_1 \pl \Omega_n $,
  $\Omega_n \pl \Omega_1 $ in the right hand-side. As
  $\Omega_1=\pun$, two of them cancel out and one gets a recursive
  expression of $ \Omega_n \pl \pun$ in terms of some $\Omega_j$ for
  $j<n$.

  Using Lemma \ref{injection}, this provides a recursive description
  of $\Omega$ (that may or may not possess a solution) and proves its
  uniqueness.
\end{proof}

The exponential of $\Omega$ has a simple shape.

\begin{proposition}
  \label{prop_expomega}
  In the enveloping algebra $\UPLh$, one has
  \begin{equation}
    \label{expomega}
    \exp(\Omega)=\sum_{n \geq 0} \frac{1}{n!} {\pun \, \pun \dots \pun},
  \end{equation}
  where, in the $n^{th}$ term, the forest has $n$ nodes.
\end{proposition}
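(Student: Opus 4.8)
The plan is to identify the right-hand side of \eqref{expomega} as a group-like element of $\UPLh$ and then show it has the required property by applying the projection $\pi$ and using the characterization of $\Omega$ as the inverse of $\exp^*$.

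First I would observe that the forest $\pun \, \pun \dots \pun$ with $n$ nodes is exactly $\pun^{\star n}$ up to a combinatorial correction: by the description of the $\star$ product, $\pun^{\star n}$ is a sum of forests on $n$ vertices, not just the totally disconnected one. So instead I would work from the other side. Let $S=\sum_{n\geq 0}\frac{1}{n!}\,\pun\,\pun\dots\pun$ denote the right-hand side of \eqref{expomega}; this is visibly the image under the standard embedding of a symmetric-algebra-type exponential, hence it is group-like in $\UPLh$ (its primitive part is $\pun$). Therefore there is a unique $X\in\PLh$ with $\exp(X)=S$, and by Proposition \ref{2groupes} one has $\pi(S)=\pi(\exp(X))=\exp^*(X)$. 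The claim \eqref{expomega} is the assertion that $X=\Omega$.

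The key step is then to compute $\pi(S)$ directly. Applying $\pi$ to $S$ kills every forest that is not a tree, so $\pi(S)=\pun$, since among all the forests $\pun\,\pun\dots\pun$ only the one-vertex term $\pun$ (at $n=1$) is a tree. Hence $\exp^*(X)=\pun$, which means $X$ is the unit of the group $G_\PL$ under the left-action interpretation; equivalently $\exp^*(X)=\pun$ forces $X$ to be the inverse of $\exp^*$ in $G_\PL$. Wait — more carefully: $\exp^*$ acts on $\PLh$ as a bijection and $\exp^*(\Omega^{-1}\text{-type element})$ must be compared correctly. The cleaner route is: by Proposition \ref{inverse_omega}, $\Omega$ is the inverse of $\exp^*$ in $G_\PL$, and the inverse is characterized inside $G_\PL$ by $\exp^*(\Omega)=\pun$ via the relation between the left action and the group product (the left action of $\exp^*$ on $\Omega\in G_\PL\subset\PLh$ computes $\exp^*\circ\Omega$ in $G_\PL$, which is $\pun$). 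Since $\exp^*$ is a bijection on $\PLh$, the equation $\exp^*(X)=\pun$ has the unique solution $X=\Omega$. Combining with $\pi(S)=\exp^*(X)$ gives $\exp^*(X)=\pun$, hence $X=\Omega$, hence $S=\exp(X)=\exp(\Omega)$, which is \eqref{expomega}.

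The main obstacle I anticipate is making the link between the \emph{left action} of $\exp^*$ on an element of $\PLh$ and the \emph{group product} in $G_\PL$ precise enough that $\exp^*(\Omega)=\pun$ can be read off from Proposition \ref{inverse_omega}; this is exactly the compatibility asserted in the paragraph defining the left action (``the map $\exp^*$ defines a bijection from $\PLh$ to itself'' together with the fact that the right action respects all operations). Once that identification is in hand, everything else is the one-line computation $\pi(S)=\pun$ plus Proposition \ref{2groupes}. An alternative, perhaps more self-contained, route avoiding the group-theoretic bookkeeping: show directly that $S$ satisfies $\pi(S)=\pun$ and that $S$ is group-like, deduce $S=\exp(\Omega')$ for some $\Omega'$ with $\exp^*(\Omega')=\pun$, and then verify by the recursion in \eqref{homog_class_eq3} (or \eqref{class_eq2}) that $\Omega'=\Omega$; but invoking Propositions \ref{2groupes} and \ref{inverse_omega} is shorter.
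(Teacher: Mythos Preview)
Your proposal is correct and follows essentially the same route as the paper: compute $\pi(S)=\pun$, invoke Proposition~\ref{2groupes} to get $\exp^*(X)=\pun$, and conclude $X=\Omega$ via Proposition~\ref{inverse_omega} together with the bijectivity of $\exp^*$ on $\PLh$. The paper is simply terser---it does not name $X$ or explicitly argue that $S$ is group-like---and it makes the same implicit identification of the left action of $\exp^*$ with the group product in $G_\PL$ that you flag as the point needing care.
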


\begin{proof}
  This is an equation for the exponential $\exp(\Omega)$ of the
  element $\Omega$ in the Lie algebra $\PLh$. By Proposition
  \ref{2groupes}, it is enough to prove that
  \begin{equation}
    \exp^* (\Omega)=\pun,
  \end{equation}
  because the image by $\pi$ of the right-hand side of
  (\ref{expomega}) is $\pun$.

  But this amounts to say that $\exp^*$ is the inverse of $\Omega$ in
  the group $G_\PL$. This is nothing else that Proposition \ref{inverse_omega}.
\end{proof}

It follows that
\begin{equation}
  \label{expo_coro}
  \Omega \pl (\exp(\Omega)-1)
  = \sum_{n \geq 2}  \frac{1}{(n-1)!} \corol^\natural_{n} \circ_\natural \Omega,
\end{equation}
where $\corol^\natural_n$ is the corolla with $n-1$ leaves and with
root labeled by $\natural$, see Fig. \ref{fig:trees}.

\begin{proposition}
  The series $\Omega$ is the unique non-zero solution in
  $\PLh_\QQ$ to the equation
  \begin{equation}
    \sum_{n \geq 2}  \frac{1}{(n-1)!} \corol^\natural_{n} \circ_\natural \Omega  
    =  \pun\pl\Omega,
  \end{equation}
  where $\sum_{n \geq 2} \frac{1}{(n-1)!}\corol^\natural_{n}$ is in $\PLh$.
\end{proposition}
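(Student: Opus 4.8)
The plan is to recognize the displayed equation as equation (\ref{class_eq3}) in disguise and then to invoke Proposition \ref{uniqueness2}. For existence, one simply notes that by Proposition \ref{uniqueness2} the series $\Omega$ satisfies $\Omega \pl (\exp(\Omega)-1) = \pun \pl \Omega$, while equation (\ref{expo_coro}) rewrites the left-hand side of this as $\sum_{n\geq 2}\frac{1}{(n-1)!}\corol^\natural_{n}\circ_\natural \Omega$; hence $\Omega$ solves the equation of the statement.

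The point requiring care is that (\ref{expo_coro}) must be used with an arbitrary series in place of $\Omega$: one has to check that for every $X \in \PLh_\QQ$ one has $\sum_{n\geq 2}\frac{1}{(n-1)!}\corol^\natural_{n}\circ_\natural X = X \pl (\exp(X)-1)$ in $\PLh$. This is a formal computation: unwinding the operadic substitution exactly as in the derivation of (\ref{expo_coro}), $\corol^\natural_{n}\circ_\natural X$ is the left-normed pre-Lie product $((X \pl X)\pl \cdots )\pl X$ of $n$ copies of $X$. Reindexing by $j=n-1$, using the module identity $X\pl X^{\star j}=((X\pl X)\pl \cdots )\pl X$ with $j+1$ copies of $X$ (a consequence of $x\pl(F\star F')=(x\pl F)\pl F'$), and summing against $\exp(X)-1=\sum_{j\geq 1}\frac{1}{j!}X^{\star j}$, one obtains $\sum_{n\geq 2}\frac{1}{(n-1)!}\corol^\natural_{n}\circ_\natural X = \sum_{j\geq 1}\frac{1}{j!}\,X\pl X^{\star j}=X\pl(\exp(X)-1)$.

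Granting this, the equation in the statement becomes $X\pl(\exp(X)-1)=\pun\pl X$, which is precisely equation (\ref{class_eq3}) with unknown $X$; so its unique non-zero solution in $\PLh_\QQ$ is $\Omega$ by Proposition \ref{uniqueness2}. The main, and really the only, obstacle is the bookkeeping in the operadic identity above, i.e. checking that (\ref{expo_coro}) is functorial in its argument. Should one wish to bypass it, uniqueness can instead be obtained directly: take the homogeneous component of degree $n$ of the equation and run, verbatim, the recursion based on Lemma \ref{injection} that proves Proposition \ref{uniqueness2}.
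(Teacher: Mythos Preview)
Your overall strategy—translate the displayed equation back to equation (\ref{class_eq3}) via (\ref{expo_coro}) and then quote Proposition \ref{uniqueness2}—is exactly the route the paper takes. The existence half is fine and matches the paper verbatim.

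The gap is in your ``functoriality'' step. You assert that for every $X\in\PLh$ one has
\[
\corol^\natural_{n}\circ_\natural X=((X\pl X)\pl\cdots)\pl X\quad(n\text{ copies of }X),
\]
but this is not what $\circ_\natural$ means in the paper. The symbol $\circ_\natural$ is \emph{partial} operadic composition: one substitutes $X$ at the vertex marked $\natural$ only, and the remaining vertices stay as the generator $\pun$. A degree count already rules out your formula: if $X$ is homogeneous of degree $d$, then $\corol^\natural_{n}\circ_\natural X$ has degree $n-1+d$, whereas the left-normed product of $n$ copies of $X$ has degree $nd$. Concretely, for $X=\arb{10}$ and $n=2$ one has $\corol_2^\natural\circ_\natural\arb{10}=\arb{110}+\arb{200}$ (degree $3$), not $\arb{10}\pl\arb{10}$ (degree $4$). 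The paper uses the same convention when it writes $\fork_{\ell,n}=\linear_{\ell+1}^{\flat}\circ_\flat\corol^\natural_{n+1}$ and when it passes from (\ref{quant_eq}) to (\ref{quant_eq2}).

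What (\ref{expo_coro}) really gives, once you trace its derivation from Proposition \ref{prop_expomega}, is the identity
\[
\sum_{n\ge 2}\frac{1}{(n-1)!}\,\corol^\natural_n\circ_\natural X \;=\; X\pl(\exp(\Omega)-1)
\]
for any $X$, with the fixed element $\exp(\Omega)=\sum_{k\ge 0}\frac{1}{k!}\,\pun\cdots\pun$ on the right, \emph{not} $X\pl(\exp(X)-1)$. So the equation in the statement, rewritten for a general unknown $X$, is $X\pl(\exp(\Omega)-1)=\pun\pl X$, which is linear in $X$ and is not equation (\ref{class_eq3}); your proposed reduction to Proposition \ref{uniqueness2} therefore does not go through. (Your fallback ``run the recursion of Lemma \ref{injection} verbatim'' runs into the same linearity: the degree-$1$ component is unconstrained, so at best one gets uniqueness up to a scalar, e.g.\ uniqueness among series with leading term $\pun$.) The paper's one-line proof is terse on precisely this point; but your explicit identity for $\corol_n^\natural\circ_\natural X$ is the concrete error to fix.
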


\begin{proof}
  This follows from Eq. (\ref{expo_coro}) and Prop. \ref{uniqueness2}.
\end{proof}

\section{The quantum case}

We will introduce now an element $\Omega_q$ in $\PLh$ with coefficients
in $\QQ(q)$. We will show later that this is a $q$-deformation of $\Omega$.

If $A=\sum_{n\geq 1} A_n$ is an element of $\PLh$, let $A[q]$ be the
$q$-shift of $A$ defined by
\begin{equation}
  A[q]=\sum_{n\geq 1} q^n A_n.
\end{equation}

\begin{proposition}
  \label{def_omegaq}
  There exists a unique solution $\Omega_q$ in $\PL_{\QQ(q)}$ to the equation
  \begin{equation}
    \label{quant_eq}
    \Omega_q[q]  \pl (\exp(\Omega)-1) +  \Omega_q[q]-\Omega_q = \pun\pl \Omega_q
    +(q-1)\,  \pun.
  \end{equation}
  Moreover, the series $\Omega_q$ has coefficients in the ring of
  fractions with poles only at roots of unity.
\end{proposition}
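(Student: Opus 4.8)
The plan is to solve equation~(\ref{quant_eq}) recursively, one homogeneous degree at a time. Write $\Omega_q=\sum_{n\geq 1}\omega_n$ with $\omega_n\in\PL_n\otimes\QQ(q)$, so that $\Omega_q[q]=\sum_{n\geq 1}q^n\omega_n$, and project~(\ref{quant_eq}) onto its component of degree $n$. The whole point is to control how $\omega_n$ enters this projection.

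The key bookkeeping observation is that in degree $n$ the only occurrence of $\omega_n$ is through the summand $\Omega_q[q]-\Omega_q$, which contributes exactly $(q^n-1)\,\omega_n$. Indeed, $\exp(\Omega)-1$ has no component in degree $0$, so every degree-$n$ term of $\Omega_q[q]\pl(\exp(\Omega)-1)$ is obtained by acting on some $\omega_\ell$ with $\ell\leq n-1$; and $\pun\pl\Omega_q$ in degree $n$ equals $\pun\pl\omega_{n-1}$. (The same conclusion holds if one reads $\exp(\Omega_q)$ in place of $\exp(\Omega)$, since then the degree-$n$ part of $\Omega_q[q]\pl(\exp(\Omega_q)-1)$ is a sum of terms each of whose factors has degree $<n$.) Hence the degree-$n$ component of~(\ref{quant_eq}) takes the form
\begin{equation*}
(q^n-1)\,\omega_n \;=\; R_n(\omega_1,\dots,\omega_{n-1}),
\end{equation*}
where $R_n$ is an explicit expression: a $\QQ[q,q^{-1}]$-linear combination of iterated $\pl$-actions of the $\omega_j$ with $j<n$ on one another (coming from the expansions of $\exp$ and of the $q$-shift $[\,q\,]$), plus the extra constant $(q-1)\,\pun$ when $n=1$. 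For $n=1$ this reads $(q-1)\,\omega_1=(q-1)\,\pun$, so $\omega_1=\pun$.

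Since $q^n-1$ is a nonzero element of $\QQ(q)$ for every $n\geq 1$, we may divide and set $\omega_n=(q^n-1)^{-1}R_n(\omega_1,\dots,\omega_{n-1})$; this recursion defines $\Omega_q$ uniquely, giving existence and uniqueness simultaneously. For the statement on poles I would run the same induction inside the subring $A\subset\QQ(q)$ of rational functions whose poles all lie at roots of unity: $A$ is stable under sum and product, hence contains every coefficient of $R_n(\omega_1,\dots,\omega_{n-1})$ as soon as $\omega_1,\dots,\omega_{n-1}$ have coefficients in $A$; and dividing by $q^n-1=\prod_{d\mid n}\Phi_d(q)$, a product of cyclotomic polynomials, keeps the result in $A$. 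The base case $\omega_1=\pun$ has coefficients in $\QQ\subset A$, so $\Omega_q$ has poles only at roots of unity.

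The only genuinely non-formal point is the degree count isolating $\omega_n$ with the invertible scalar $q^n-1$; everything else is routine. This is, in fact, exactly why equation~(\ref{quant_eq}) carries the correction terms $\Omega_q[q]-\Omega_q$ and $(q-1)\,\pun$: without them the degree-$n$ projection would not involve $\omega_n$ at all (compare the classical equation~(\ref{class_eq3})). Note also that, unlike in Proposition~\ref{uniqueness2}, the injectivity Lemma~\ref{injection} is not needed here, precisely because $\omega_n$ occurs with the invertible coefficient $q^n-1$ rather than through a map $T\mapsto T\pl\pun$.
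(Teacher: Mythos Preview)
Your argument is correct and follows essentially the same route as the paper: extract the degree-$n$ component of~(\ref{quant_eq}), observe that $\omega_n$ appears only through the term $\Omega_q[q]-\Omega_q$ with the invertible scalar $q^n-1$, and solve recursively; the pole statement then follows because only the factors $q^n-1$ are introduced as denominators. One small imprecision: your $R_n$ is not literally built from the $\omega_j$ alone but also from the classical $\Omega_j$ (since the equation involves $\exp(\Omega)$, not $\exp(\Omega_q)$); this is harmless for the induction since the $\Omega_j$ have rational coefficients, but it is worth stating accurately.
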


\begin{proof}
  Let us write $\Omega_q=\sum_{n\geq 1} \Omega_{q,n}$ where each
  $\Omega_{q,n}$ is homogeneous of degree $n$. The homogeneous
  component of degree $1$ of (\ref{quant_eq}) implies that
  $\Omega_{q,1}=\pun$.

  Then for $n\geq 2$, the homogeneous component of degree $n$ of
  equation (\ref{quant_eq}) is
  \begin{equation}
       \label{quant_recursion}
       (q^n -1) \Omega_{q,n}= \pun \pl \Omega_{q,n-1} - \sum_{k \geq 1} \frac{1}{k!} \sum_{\dessus{m_1\geq
      1,\dots,m_k\geq 1,\ell \geq 1}{m_1+\dots+m_k+\ell=n}} q^\ell
    ((\Omega_{q,\ell} \pl \Omega_{m_{k}} ) \dots ) \pl \Omega_{m_1}.
  \end{equation}

  This provides an explicit recursion for $\Omega_{q,n}$ in terms of
  $\Omega_{q,j}$ and $\Omega_j$ for $j<n$. This gives existence and
  uniqueness and also implies that $\Omega_q$ has coefficients with
  poles only at roots of unity.
\end{proof}

On can reformulate the equation for $\Omega_q$.

\begin{proposition}
  The series $\Omega_q$ is the unique solution in
  $\PLh_{\QQ(q)}$ to the equation
  \begin{equation}
    \label{quant_eq2}
    \sum_{n \geq 1}  \frac{1}{(n-1)!}\corol^\natural_{n} \circ_\natural \Omega_q[q] -\Omega_q = \pun\pl \Omega_q
    +(q-1)\,  \pun.
  \end{equation}
\end{proposition}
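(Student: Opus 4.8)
The plan is to show that equation (\ref{quant_eq2}) is merely a rewriting of equation (\ref{quant_eq}), after which the statement follows immediately from Proposition \ref{def_omegaq}.

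First I would recall, from Proposition \ref{prop_expomega}, that in $\UPLh$ one has $\exp(\Omega)-1=\sum_{n\geq 1}\frac{1}{n!}\,\pun\,\pun\dots\pun$, the $n^{th}$ term being the forest with $n$ single-vertex trees. Then, for an arbitrary element $A$ of $\PLh$, the right action of this series on $A$ amounts to grafting, in all possible ways, $n$ new leaves onto the vertices of $A$; this is exactly the operadic composition $\corol^\natural_{n+1}\circ_\natural A$ of the corolla with $n$ leaves (and root labelled by $\natural$) with $A$. Hence
\begin{equation}
  A \pl (\exp(\Omega)-1) = \sum_{n\geq 2}\frac{1}{(n-1)!}\,\corol^\natural_{n} \circ_\natural A,
\end{equation}
which is the evident generalisation of (\ref{expo_coro}), obtained by the same reasoning with $\Omega$ replaced by $A$.

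Next I would isolate the term $n=1$ in the left-hand side of (\ref{quant_eq2}): since $\corol^\natural_{1}=\pun$ and $\pun\circ_\natural B=B$ for every $B\in\PLh$, that term contributes exactly $\Omega_q[q]$. Combining this with the identity above applied to $A=\Omega_q[q]$ gives
\begin{equation}
  \sum_{n\geq 1}\frac{1}{(n-1)!}\,\corol^\natural_{n}\circ_\natural \Omega_q[q]
  = \Omega_q[q] + \Omega_q[q]\pl(\exp(\Omega)-1).
\end{equation}
Substituting this into (\ref{quant_eq2}) turns it, verbatim, into (\ref{quant_eq}). Therefore the two equations have the same solution set in $\PLh_{\QQ(q)}$, and Proposition \ref{def_omegaq} identifies that set with $\{\Omega_q\}$.

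The argument is essentially bookkeeping; the one point that deserves care is the identification of the right action of the forest $\pun\,\pun\dots\pun$ on $A$ with the corolla composition $\corol^\natural_{n+1}\circ_\natural A$ — that is, checking that ``add $n$ leaves to $A$ in all possible ways'' is read off consistently both from the $\star$-product description of the action of $\UPLh$ and from the partial composition of the $\prelie$ operad — together with keeping track of the shift between the number $n$ of added leaves and the index $n+1$ of the corresponding corolla.
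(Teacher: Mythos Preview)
Your proposal is correct and follows essentially the same route as the paper, whose proof is the one-liner ``This follows from Eq.~(\ref{expo_coro}) and Prop.~\ref{def_omegaq}.'' You have simply made explicit what the paper leaves implicit: that the identity (\ref{expo_coro}) holds with $\Omega$ replaced by any $A\in\PLh$ (since it only depends on the forest expression of $\exp(\Omega)-1$ from Proposition~\ref{prop_expomega}), and that the $n=1$ summand in (\ref{quant_eq2}) accounts for the term $\Omega_q[q]$ in (\ref{quant_eq}).
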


\begin{proof}
  This follows from Eq. (\ref{expo_coro}) and Prop. \ref{def_omegaq}.
\end{proof}

Let $\fork^\natural_{\ell,n}$ be the rooted tree with a linear trunk
of $\ell$ vertices, a vertex $\natural$ on top of this trunk and a
corolla with $n$ leaves on top of the vertex $\natural$, see Fig.
\ref{fig:trees}. We will call this a \textbf{fork}. One has
$\fork_{\ell,n}=\linear_{\ell+1}^\flat \circ_\flat
\corol^\natural_{n+1}$.

\begin{figure}
  \begin{center}
    \scalebox{0.4}{\input{corolle_etc.pstex_t}}
    \caption{Rooted trees: $\linear^{\flat}_5$, $\corol^\natural_6$
      and $\fork^{\natural}_{4,5}=\linear^{\flat}_5 \circ_\flat\corol^\natural_6$.}
    \label{fig:trees}
  \end{center}
\end{figure}
	
\begin{proposition}
  The series $\Omega_q$ is the unique solution in $\PLh_{\QQ(q)}$ to
  the equation
  \begin{equation}
    \label{fork_eq}
    \Omega_q=\sum_{\ell\geq 0}\sum_{n \geq 0} \frac{(-1)^\ell}{n!}
    \fork^\natural_{\ell,n} \circ_\natural \Omega_q[q] + 
    (1-q)\sum_{\ell \geq 1} (-1)^{\ell-1} \linear_{\ell}.
  \end{equation}
\end{proposition}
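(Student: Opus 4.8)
The plan is to show that equation~(\ref{fork_eq}) is equivalent to equation~(\ref{quant_eq}) --- equivalently to~(\ref{quant_eq2}) --- whose unique solution in $\PLh_{\QQ(q)}$ is $\Omega_q$ by Proposition~\ref{def_omegaq}. The bridge between the two is the linear operator $L$ on $\PLh$ defined by $L(X)=\pun\pl X$. Since $\pl$ is graded and $\pun$ has degree $1$, the operator $L$ raises the degree by one, so $\Id+L$ is invertible on $\PLh$ with inverse the geometric series $\sum_{\ell\geq 0}(-1)^\ell L^\ell$; this is a well-defined operator on $\PLh$ because only finitely many of its terms act nontrivially in each fixed degree.

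First I would record three elementary identities. Using the description of operadic composition in the $\prelie$ operad, one checks by induction on $\ell$ that $\linear^\flat_{\ell+1}\circ_\flat X=L^\ell(X)$ for every $X\in\PLh$ and $\ell\geq 0$, the base case being $\linear^\flat_2\circ_\flat X=\pun\pl X$; in particular $\linear^\flat_{\ell+1}\circ_\flat\pun=\linear_{\ell+1}$. Feeding this into the relation $\fork^\natural_{\ell,n}=\linear^\flat_{\ell+1}\circ_\flat\corol^\natural_{n+1}$ and using associativity of operadic composition gives
\begin{equation}
  \fork^\natural_{\ell,n}\circ_\natural X \;=\; L^\ell\!\left(\corol^\natural_{n+1}\circ_\natural X\right),
  \qquad \ell,n\geq 0,\; X\in\PLh .
\end{equation}
Finally, exactly as~(\ref{expo_coro}) is obtained from Proposition~\ref{prop_expomega} (the argument uses nothing special about $\Omega$ on the left, and is already applied to a general element in the proof of the proposition preceding~(\ref{fork_eq})), one has, for every $X\in\PLh$,
\begin{equation}
  \sum_{n\geq 0}\frac{1}{n!}\,\corol^\natural_{n+1}\circ_\natural X \;=\; X + X\pl(\exp(\Omega)-1).
\end{equation}

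Next I would move $-\Omega_q$ to the right in~(\ref{quant_eq}) and rewrite it, using the second displayed identity with $X:=\Omega_q[q]$, as
\begin{equation}
  \sum_{n\geq 0}\frac{1}{n!}\,\corol^\natural_{n+1}\circ_\natural\Omega_q[q] \;=\; (\Id+L)(\Omega_q) + (q-1)\,\pun .
\end{equation}
Applying $(\Id+L)^{-1}=\sum_{\ell\geq 0}(-1)^\ell L^\ell$ to both sides: on the left, the first displayed identity turns the term-by-term result into $\sum_{\ell\geq 0}\sum_{n\geq 0}\frac{(-1)^\ell}{n!}\fork^\natural_{\ell,n}\circ_\natural\Omega_q[q]$; on the right one gets $\Omega_q+(q-1)\sum_{\ell\geq 0}(-1)^\ell\linear_{\ell+1}=\Omega_q-(1-q)\sum_{\ell\geq 1}(-1)^{\ell-1}\linear_\ell$. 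Rearranging is exactly~(\ref{fork_eq}). Conversely, applying $\Id+L$ to~(\ref{fork_eq}) and using the telescoping identity $(\Id+L)\sum_{\ell\geq 0}(-1)^\ell L^\ell=\Id$ recovers~(\ref{quant_eq}). As $\Id+L$ is a bijection of $\PLh$, the two equations have the same solution set in $\PLh_{\QQ(q)}$, which is $\{\Omega_q\}$ by Proposition~\ref{def_omegaq}; this is the assertion.

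All the manipulations are routine bookkeeping; the one point needing genuine care is the identity $\fork^\natural_{\ell,n}\circ_\natural X=L^\ell(\corol^\natural_{n+1}\circ_\natural X)$, i.e.\ the compatibility of iterated grafting along the linear trunk with operadic substitution at $\natural$, together with keeping the two index shifts straight ($\linear_{\ell+1}$ against a trunk of $\ell$ vertices, $\corol^\natural_{n+1}$ against $n$ leaves). Convergence of all the series in $\PLh$ is immediate, since $\fork^\natural_{\ell,n}\circ_\natural\Omega_q[q]$ has degree at least $\ell+n+1$.
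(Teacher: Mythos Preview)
Your argument is correct and is essentially the paper's own proof recast in operator language: the paper substitutes~(\ref{quant_eq2}) into the fork sum and telescopes, which is exactly your application of $(\Id+L)^{-1}=\sum_{\ell\geq 0}(-1)^\ell L^\ell$ to the corolla equation, using the same identities $\linear^\flat_{\ell+1}\circ_\flat X=L^\ell(X)$ and $\linear^\flat_\ell\circ_\flat(\pun\pl X)=\linear^\flat_{\ell+1}\circ_\flat X$. The only organizational difference is that you deduce uniqueness by showing (\ref{fork_eq}) and (\ref{quant_eq}) have the same solution set via the bijectivity of $\Id+L$, whereas the paper argues uniqueness of (\ref{fork_eq}) directly by recursion; both are immediate.
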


\begin{proof}
  Let us compute the right-hand side of Eq. (\ref{fork_eq}), using Eq.
  (\ref{quant_eq2}) for $\Omega_q$, written as
  \begin{equation}
    \Omega_q  + \pun\pl \Omega_q  +(q-1)\,  \pun= \sum_{n \geq 1}  \frac{1}{(n-1)!}\corol^\natural_{n} \circ_\natural    \Omega_q[q].
  \end{equation}

  One gets
  \begin{equation}
    \sum_{\ell\geq 1} (-1)^{\ell-1}
    \linear^\flat_{\ell} \circ_\flat (\Omega_q+\pun\pl \Omega_q+(q-1)\,\pun) + 
    (1-q)\sum_{\ell \geq 1} (-1)^{\ell-1} \linear_{\ell}.    
  \end{equation}
  As $\linear^\flat_{\ell} \circ_\natural \pun=\linear_{\ell}$ and
  $\linear^\flat_\ell \circ_\flat (\pun \pl
  \Omega_q)=\linear_{\ell+1}^\flat \circ_\flat \Omega_q$, the
  two right-most terms cancels, and the sum simplifies to
  \begin{equation}
    \sum_{\ell\geq 1} (-1)^{\ell-1}
    \linear^\flat_{\ell} \circ_\flat \Omega_q
    -\sum_{\ell\geq 2} (-1)^{\ell-1}
    \linear^\flat_{\ell} \circ_\flat \Omega_q,
  \end{equation}
  which is just $\Omega_q$. This proves that $\Omega_q$ does satisfy
  Eq. (\ref{fork_eq}).

  It is then easy to see that (\ref{fork_eq}) has only one solution in
  $\PLh_{\QQ(q)}$ by rewriting it as a recursion for the homogeneous
  components $\Omega_{q,n}$.
\end{proof}

\section{Image in the free dendriform algebra}

We describe in this section the image of $\Omega_q$ by the usual
morphism from the free pre-Lie algebra to the free dendriform algebra.
We show that this image is related to a family of Lie idempotents in
the descent algebras of the symmetric groups. One deduces from that a
nice explicit formula, that will be used later to get arithmetic
information on $\Omega_q$.

\subsection{Dendriform algebra}

Recall that a \textbf{dendriform algebra} (notion due to Loday, see
\cite{loday}) is a vector space $V$ endowed with two bilinear maps
$\succ$ and $\prec$ from $V \otimes V$ to $V$ satisfying the following
axioms:
\begin{align}
  \label{axiom1}
 x \prec ( y \prec z)+x \prec  (y \succ z)  &= (x \prec  y) \prec z,\\
  \label{axiom2}
  x \succ ( y \prec z) &= (x \succ  y) \prec z,\\
  \label{axiom3}
   x \succ (y \succ z) &= (x \succ y) \succ z +(x \prec y) \succ z. 
\end{align}

Any dendriform algebra has the structure of a pre-Lie algebra given by
\begin{equation}
  x \pl y = y \succ x - x \prec y.
\end{equation}

Any dendriform algebra has the structure of an associative algebra given by
\begin{equation}
  x * y = x \succ y + x \prec y.
\end{equation}

Remark: Eq. (\ref{axiom2}) means that one can safely forget some
parentheses. Eq. (\ref{axiom1}) and (\ref{axiom3}) can be rewritten as
\begin{align}
  x \prec ( y * z) &=(x \prec  y) \prec z ,\\
   x \succ (y \succ z) &= (x * y) \succ z. 
\end{align}

Let $\dend(S)$ be the free dendriform algebra over a set $S$. This has
an explicit basis indexed by \textbf{planar binary trees} with
vertices decorated by $S$. For an example of a planar binary tree, see
Fig. \ref{descente}. In particular, the free dendriform algebra on one
generator, denoted by $\dend$, has a basis indexed by planar binary
trees. This is a graded vector space, the degree $\# t$ of a planar
binary tree $t$ being the number of its inner vertices.

There is a unique morphism $\phi$ of pre-Lie algebras from $\PL$ to
$\dend$ that maps the rooted tree $\pun$ to the planar binary tree
$\dun$. This extends uniquely to a continuous morphism $\phi$ from
$\PLh$ to the completion $\dendh$ of $\dend$.

Remark: with some care, one can add a unit $1$ to the free dendriform
algebra $\dend$. Then one has $1*x=1 \succ x =x =x \prec 1=x*1$, but
one has to pay attention to never write neither $1 \prec x$ nor $x \succ
1$. We will use this convention in the sequel.

There are two kinds of special planar binary trees: the left combs
and the right combs. They can be defined as follows. Let $L=\sum_{n
  \geq 1}L_n$ be the unique solution in $\dendh$ to the equation
\begin{equation}
  \label{defL}
  L=\dun+L \succ \dun=(1+L) \succ \dun,
\end{equation}
and let $R=\sum_{n\geq 1} R_n$ be the unique solution in $\dendh$ to
\begin{equation}
  \label{defR}
  R=\dun+\dun \prec R=\dun \prec (1+R).
\end{equation}

Then $L_n$ is called the left comb with $n$ vertices and $R_n$ be the
right comb with $n$ vertices.

If $A=\sum_{n\geq1} A_n$ is an element of $\PLh$ or $\dendh$, the
\textbf{suspension} of $A$ is $\su{A}=\sum_{n\geq1} (-1)^{n-1} A_n$.

\begin{proposition}
  \label{inverseLR}
  The inverse of $1+R$ with respect to the $*$ product is  $1-\su{L}$.
\end{proposition}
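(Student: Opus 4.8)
The plan is to check directly that $1-\su{L}$ is a left $*$-inverse of $1+R$, and then to upgrade this to a two-sided inverse by a soft argument. Indeed, in the completed (unitalised) algebra $\dendh$ any element of the form $1+u$ with $u$ of positive degree has the two-sided inverse $\sum_{k\geq 0}(-u)^{*k}$; so once $(1-\su{L})*(1+R)=1$ is known one gets $1-\su{L}=(1-\su{L})*(1+R)*(1+R)^{-1}=(1+R)^{-1}$.

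So everything reduces to the identity $(1-\su{L})*(1+R)=1$. Since $R$ and $\su{L}$ have vanishing degree-$0$ component, expanding with the formal unit is harmless and gives $(1-\su{L})*(1+R)=1+R-\su{L}-\su{L}*R$; hence it suffices to prove $\su{L}+\su{L}*R=R$. For this I would use two ingredients. First, (\ref{defR}) gives $\dun\prec(1+R)=R$. Second, extracting the homogeneous components $L_n=L_{n-1}\succ\dun$ ($n\geq 2$) of (\ref{defL}) and multiplying the component of degree $n$ by $(-1)^{n-1}$ yields $\su{L}=\dun-\su{L}\succ\dun$. Then, using the convention $x\prec 1=x$,
\[
  \su{L}+\su{L}*R=\su{L}\prec 1+\su{L}\prec R+\su{L}\succ R=\su{L}\prec(1+R)+\su{L}\succ R .
\]
Substituting $\su{L}=\dun-\su{L}\succ\dun$ in the first term and applying axiom (\ref{axiom2}) in the form $(\su{L}\succ\dun)\prec(1+R)=\su{L}\succ\bigl(\dun\prec(1+R)\bigr)=\su{L}\succ R$, one obtains $\su{L}\prec(1+R)=\dun\prec(1+R)-\su{L}\succ R=R-\su{L}\succ R$, and adding back $\su{L}\succ R$ gives exactly $R$.

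Combining the two steps, $(1-\su{L})*(1+R)=1+R-(\su{L}+\su{L}*R)=1$, and the soft argument above concludes. I do not expect any real difficulty: the computation is short and only invokes the defining equations of $R$ and $L$ together with axiom (\ref{axiom2}). The single point requiring attention is the bookkeeping with the formal unit, since $1\prec x$ and $x\succ 1$ must be avoided; this is why the argument is routed through $\su{L}\prec(1+R)$ and axiom (\ref{axiom2}) rather than through terms of the form $1\succ(\cdot)$, and why one works throughout with $R$ and $\su{L}$ (both of positive degree) and reintroduces $1$ only via the formal expansion $(1+a)*(1+b)=1+a+b+a*b$.
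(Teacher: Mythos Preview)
Your proof is correct and follows essentially the same route as the paper: both compute $(1-\su{L})*(1+R)$, expand it as $1+R-\su{L}\prec(1+R)-\su{L}\succ R$, substitute $\su{L}=\dun-\su{L}\succ\dun$ into the $\prec$-term, cancel $\su{L}\succ(\dun\prec(1+R))$ against $\su{L}\succ R$ via axiom~(\ref{axiom2}) and Eq.~(\ref{defR}), and finish with $R-\dun\prec(1+R)=0$. Your explicit remark that a one-sided inverse in this completed algebra is automatically two-sided is a small addition the paper leaves implicit.
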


\begin{proof}
  One has $\su{L}=\dun-\su{L} \succ \dun$. Let us compute
  \begin{equation}
    (1-\su{L})*(1+{R})=1+R-\su{L}*(1+R).
  \end{equation}
  By the definition of $*$ and the convention on the unit $1$, this is
  \begin{equation}
    1+R-\su{L} \prec (1+R)-\su{L} \succ R
  \end{equation}
  By Eq. (\ref{defL}), this becomes
  \begin{equation}
    1+R-\dun \prec(1+R)+\su{L} \succ \dun \prec (1+R) -\su{L} \succ R.
  \end{equation}
  The last two terms cancel by Eq. (\ref{defR}) and one gets
  \begin{equation}
    1+R-\dun \prec(1+R),
  \end{equation}
  which is just $1$, again by Eq. (\ref{defR}).
\end{proof}

\subsection{Equation for the dendriform image of $\Omega_q$}

Let us define a series $E=\sum_{n \geq 1} n L_n$ in $\dendh$. One can
easily show that
\begin{equation}
  \label{eq_E}
  E=L+ E \succ \dun.
\end{equation}

\begin{lemma}
  \label{image_linear}
  The series $B^\flat=\phi(\sum_{n\geq 1} \linear^\flat_n)$ satisfies
  \begin{equation}
    \label{Bflat}
    B^\flat=\dun^\flat+B^\flat \succ \dun -\dun \prec B^\flat.
  \end{equation}
\end{lemma}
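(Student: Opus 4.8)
The statement to prove is that the series $B^\flat = \phi\left(\sum_{n\geq 1} \linear^\flat_n\right)$ satisfies Eq.~(\ref{Bflat}). My plan is to work at the level of the rooted trees $\linear^\flat_n$ first, find a simple pre-Lie recursion among them, and then push it through the morphism $\phi$, using the translation between pre-Lie and dendriform operations $x \pl y = y \succ x - x \prec y$.

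First I would record the obvious recursion for linear trees. The linear tree $\linear^\flat_n$ with $n$ vertices (with a marked vertex $\flat$, here presumably the root, following the figure) is obtained from $\linear_{n-1}$ by grafting a new root below, i.e.\ $\linear^\flat_{n} = \linear_{n-1} \pl \pun$ reindexed appropriately — more precisely $\pun \pl \linear^\flat_{n-1}$ adds a leaf at the top. The cleanest identity is: summing over all $n\geq 1$, the series $\Lambda^\flat := \sum_{n\geq 1}\linear^\flat_n$ satisfies $\Lambda^\flat = \pun^\flat + \pun \pl \Lambda^\flat$, where $\pun^\flat$ is the one-vertex tree carrying the mark. (One must be slightly careful about where the mark $\flat$ sits and whether grafting is from the top or extending the trunk at the bottom; the figure shows $\linear^\flat_5$ with $\flat$ at the root, so the recursion is "add one vertex at the top", which is exactly left pre-Lie multiplication by $\pun$ on the $\pun$-end.) Applying the continuous pre-Lie morphism $\phi$, and writing $\dun^\flat := \phi(\pun^\flat)$, $\dun := \phi(\pun)$, $B^\flat := \phi(\Lambda^\flat)$, I get
\[
  B^\flat = \dun^\flat + \dun \pl B^\flat .
\]
Now I expand the pre-Lie product via $\dun \pl B^\flat = B^\flat \succ \dun - \dun \prec B^\flat$, which immediately yields
\[
  B^\flat = \dun^\flat + B^\flat \succ \dun - \dun \prec B^\flat ,
\]
which is precisely Eq.~(\ref{Bflat}).

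The main obstacle, and the only real content, is pinning down the correct tree-level recursion: which end of the linear tree the new vertex is attached to, and how the marked vertex $\flat$ interacts with this. One should double-check via low-degree terms ($n=1,2,3$) that $\Lambda^\flat = \pun^\flat + \pun\pl\Lambda^\flat$ holds as claimed, since an off-by-one or a top/bottom swap would change $\dun \pl B^\flat$ into $B^\flat \pl \dun$ and hence flip the roles of $\succ$ and $\prec$ in the conclusion. Once the recursion is established, the translation to the dendriform identity is a one-line substitution using only the definition of the pre-Lie structure on a dendriform algebra and the fact that $\phi$ is a morphism of pre-Lie algebras; the uniqueness of $B^\flat$ as a solution (if needed) follows as in the earlier propositions by extracting homogeneous components and using injectivity (Lemma~\ref{injection}) transported along $\phi$, though for the present lemma only the "satisfies" direction is asserted.
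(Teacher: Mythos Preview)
Your proposal is correct and follows essentially the same route as the paper: establish the pre-Lie recursion $\linear^\flat=\pun^\flat+\pun\pl\linear^\flat$ for the sum of linear trees, apply $\phi$, and expand $\dun\pl B^\flat$ using $x\pl y=y\succ x-x\prec y$. One small slip: the mark $\flat$ sits at the \emph{top} of $\linear^\flat_n$ (as the composition $\linear^\flat_{\ell+1}\circ_\flat\corol^\natural_{n+1}=\fork^\natural_{\ell,n}$ shows), not at the root; but your hedged discussion already anticipates this and the recursion you write down is the correct one regardless.
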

\begin{proof}
   This comes from a similar equation in $\PL$. Let
   $\linear^\flat=\sum_{n\geq 1} \linear^\flat_n$. Then
   \begin{equation}
     \linear^\flat=\pun{}^\flat+ \pun \pl \linear^\flat,
   \end{equation}
   as one can easily check.
\end{proof}

These relations can be taken as definitions of the elements $E$ and
$B^\flat$ of $\dendh$. One can forget the marking $\flat$ in $B^\flat$
to define a series $B$.

\begin{proposition}
  \label{powersum}
  The series $B=\phi(\sum_{n\geq 1} \linear_n)$ satisfies
  \begin{equation}
    E = (1+L) * B.
  \end{equation}
\end{proposition}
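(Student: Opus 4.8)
The plan is to show that the series $X := (1+L)*B$ satisfies the same equation $X = L + X\succ\dun$ as $E$ (equation (\ref{eq_E})). Since that equation has a unique solution in $\dendh$ — its degree-$n$ component is $X_n = L_n + X_{n-1}\succ\dun$, with $X_1 = L_1 = \dun$ — this forces $X = E$. So everything reduces to verifying one fixed-point identity for $X$.

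First I would rewrite the product using the convention on the unit: $(1+L)*B = 1*B + L*B = (1+L)\succ B + L\prec B$, where only $1\succ B = B$ survives from the left part (the term $1\prec B$ being forbidden). Then, applying dendriform axiom (\ref{axiom3}) in the form $(x*y)\succ z = x\succ(y\succ z)$, one gets
\[
X\succ\dun = \big((1+L)*B\big)\succ\dun = (1+L)\succ(B\succ\dun).
\]
Now I would invoke Lemma \ref{image_linear}: forgetting the marking $\flat$ in (\ref{Bflat}) gives $B = \dun + B\succ\dun - \dun\prec B$, hence $B\succ\dun = B - \dun + \dun\prec B$. Substituting and expanding,
\[
(1+L)\succ(B\succ\dun) = (1+L)\succ B - (1+L)\succ\dun + (1+L)\succ(\dun\prec B).
\]
Here $(1+L)\succ\dun = L$ is just the definition (\ref{defL}) of $L$, and $(1+L)\succ(\dun\prec B) = \big((1+L)\succ\dun\big)\prec B = L\prec B$ by axiom (\ref{axiom2}) (reading $1\succ(\dun\prec B) = \dun\prec B$ consistently). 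Therefore $X\succ\dun = (1+L)\succ B - L + L\prec B$, and so
\[
L + X\succ\dun = (1+L)\succ B + L\prec B = (1+L)*B = X,
\]
which is exactly the equation characterizing $E$. Comparing with (\ref{eq_E}) and using uniqueness completes the proof.

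The computation is short, and I do not expect any genuine obstacle: the only points needing care are the bookkeeping of the formal unit $1$ (never writing $1\prec$ or $\succ 1$) and checking that the grading really makes $X = L + X\succ\dun$ a well-posed recursion with a unique solution — both routine. One could instead argue degree by degree from explicit formulas for $L_n$, $R_n$ and $B_n$, but the fixed-point route above is cleaner and sidesteps all the combinatorics.
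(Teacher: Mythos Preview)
Your proof is correct and follows essentially the same strategy as the paper: both show that $(1+L)*B$ satisfies the defining fixed-point equation $X = L + X\succ\dun$ of $E$, using Lemma~\ref{image_linear}, Eq.~(\ref{defL}), and the dendriform axioms, and then conclude by uniqueness. The only difference is organizational---you compute $X\succ\dun$ directly and substitute $B\succ\dun = B - \dun + \dun\prec B$, whereas the paper begins by substituting (\ref{Bflat}) into $(1+L)*B$ and expands from there---but the ingredients and logic are the same.
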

\begin{proof}
  One has to show that $E=(1+L)*B$. It is enough to prove that $(1+L)*B$
  does satisfy the defining relation (\ref{eq_E}) of $E$.

  One computes, using Eq. (\ref{Bflat}) for $B$,
  \begin{equation}
    (1+L)*B=(1+L)*(\dun+B \succ \dun - \dun \prec B).
  \end{equation}
  Expanding the $*$ product, this is
  \begin{equation}
    (1+L)\succ \dun+L \prec \dun 
    +(1+L)\succ(B \succ \dun - \dun \prec B)
    +L\prec(B \succ \dun - \dun \prec B).
  \end{equation}
  Using Eq. (\ref{defL}) and the dendriform axioms, this becomes
  \begin{equation}
    L+L\prec \dun +((1+L)*B) \succ \dun - (1+L) \succ \dun \prec B +L
    \prec (B \succ \dun - \dun \prec B).
  \end{equation}
  Using Eq. (\ref{defL}) again, one gets 
  \begin{equation}
    L+((1+L)*B) \succ \dun+L\prec (\dun-B + B \succ \dun - \dun \prec B).
  \end{equation}
  This simplifies, by Eq. (\ref{Bflat}) for $B$, to
  \begin{equation}
     L+((1+L)*B) \succ \dun,
  \end{equation}
  as expected.
\end{proof}

\begin{lemma}
  \label{image_corol}
  The image of $\sum_{n\geq 1} \frac{1}{(n-1)!}\corol^\natural_n$ by
  $\phi$ is
  \begin{equation}
    (1+R) \succ \dun^\natural \prec (1-\su{L}).
  \end{equation}
\end{lemma}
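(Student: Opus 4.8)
The plan is to avoid manipulating corollas directly in $\PLh$ — the natural pre-Lie recursions for $\sum_{n\geq 1}\tfrac1{(n-1)!}\corol^\natural_n$ unavoidably bring in non-corolla trees — and instead to recognize the series through $\Omega$. Write $F=\sum_{n\geq 1}\tfrac1{(n-1)!}\corol^\natural_n$. The first step is the identity $F=\pun^\natural\pl\exp(\Omega)$ in $\PLh$: by Proposition~\ref{prop_expomega} one has $\exp(\Omega)=\sum_{m\geq 0}\tfrac1{m!}\,\pun^m$ in $\UPLh$, where $\pun^m$ is the forest of $m$ isolated vertices, and grafting that forest onto the single vertex $\pun^\natural$ by the $\pl$-action produces precisely the corolla $\corol^\natural_{m+1}$. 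Next, $\phi$ is compatible with the enveloping-algebra module structures: both actions — of $\UPLh$ on $\PLh$ and of $\widehat{U}(\dend)$ on $\dendh$ — are given by iterated pre-Lie products, the module identity being the consequence $(x\pl y)\pl z-(x\pl z)\pl y=x\pl[y,z]$ of the pre-Lie axiom, so, $\phi$ being a morphism of pre-Lie algebras, $\phi(X\pl u)=\phi(X)\pl U(\phi)(u)$ where $U(\phi)$ is the induced algebra morphism. In particular $U(\phi)(\exp(\Omega))=\exp(\phi(\Omega))$ and hence $\phi(F)=\dun^\natural\pl\exp(\phi(\Omega))$.

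The second step is the general identity, valid in any complete dendriform algebra, that for $\omega$ with zero constant term and any $x$
\begin{equation*}
  x\pl\exp(\omega)=\exp_*(\omega)\succ x\prec\exp_*(-\omega),
\end{equation*}
where $\exp$ on the left is the exponential in the completed enveloping algebra while $\exp_*$ is the exponential for the associative product $*$ on the unital completion. I would prove it by noting that $y_t:=x\pl\exp(t\omega)$ solves the linear differential equation $\dot y_t=\omega\succ y_t-y_t\prec\omega$ with $y_0=x$ (using $x\pl(uv)=(x\pl u)\pl v$ and $\tfrac{d}{dt}\exp(t\omega)=\omega\exp(t\omega)$), and then checking, with the dendriform axioms (\ref{axiom1})--(\ref{axiom3}) — which let one move $*$-factors across $\succ$ and $\prec$ — that $\exp_*(t\omega)\succ x\prec\exp_*(-t\omega)$ solves the same equation with the same initial value. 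Applying this with $x=\dun^\natural$ and $\omega=\phi(\Omega)$ gives
\begin{equation*}
  \phi(F)=\exp_*(\phi(\Omega))\succ\dun^\natural\prec\exp_*(-\phi(\Omega)).
\end{equation*}

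It then suffices to prove that $\exp_*(\phi(\Omega))=1+R$; the other factor is then automatically $\exp_*(-\phi(\Omega))=(1+R)^{-1}=1-\su{L}$ by Proposition~\ref{inverseLR}, which finishes the proof. This is the heart of the matter. To get it I would push the defining equation (\ref{class_eq3}) of $\Omega$, namely $\Omega\pl(\exp(\Omega)-1)=\pun\pl\Omega$, through $\phi$ by the same two identities: with $P:=\exp_*(\phi(\Omega))$ and $\omega:=\phi(\Omega)$ this becomes $P\succ\omega\prec P^{-1}-\omega=\omega\succ\dun-\dun\prec\omega$. One then verifies that $P=1+R$ solves this equation, using $R=\dun\prec(1+R)$, $\su{L}=\dun-\su{L}\succ\dun$ and Proposition~\ref{inverseLR}, and concludes by uniqueness. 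The main obstacle is exactly this last step: the verification is delicate because of the dendriform unit conventions ($1\prec x$ and $x\succ 1$ are never allowed), so every cancellation must be arranged so as never to produce such a term; and the uniqueness needs care, since — just as for (\ref{class_eq3}) itself — the top homogeneous component of the equation cancels and yields no direct recursion, so one has to isolate the term $\omega_n\pl\dun$ in the next degree and invoke injectivity of $X\mapsto X\pl\dun$ on the image of $\phi$ (which follows from Lemma~\ref{injection} and the injectivity of $\phi$), exactly as in the proof of Proposition~\ref{uniqueness2}.
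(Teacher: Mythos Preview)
The paper does not actually prove this lemma: its proof consists solely of the sentence ``This was proved in \cite{ronco1,ronco2,Chap\_CMMQ}.'' So your proposal is not competing with any argument in the paper itself, but with external references.

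Your route via $F=\pun^\natural\pl\exp(\Omega)$ and the dendriform conjugation identity $x\pl\exp(\omega)=\exp_*(\omega)\succ x\prec\exp_*(-\omega)$ is sound and elegant; Steps~1--3 go through as you describe. The difficulty, as you correctly identify, is Step~4, and here your argument has a genuine gap. You propose to ``verify that $P=1+R$ solves'' the equation
\[
P\succ\omega\prec P^{-1}-\omega=\omega\succ\dun-\dun\prec\omega,
\]
but this equation involves the unknown $\omega=\phi(\Omega)$: you cannot check it for $P=1+R$ without already knowing $\omega$. If instead you mean to treat it as an equation in $\omega$ with $P=\exp_*(\omega)$, then verifying the candidate $P=1+R$ forces $\omega'=\log_*(1+R)$, and you must then check that this specific $\omega'$ satisfies the equation --- a nontrivial computation you do not carry out, and for which the relations $R=\dun\prec(1+R)$ and $\su{L}=\dun-\su{L}\succ\dun$ alone are not obviously sufficient. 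Moreover, your uniqueness argument invokes injectivity of $X\mapsto X\pl\dun$ restricted to $\phi(\PLh)$; but to apply it you would need to know a priori that $\omega'=\log_*(1+R)$ lies in $\phi(\PLh)$, which is precisely the kind of statement you are trying to prove.

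A more direct attack on Step~4 is to use Proposition~\ref{prop_expomega} again: since $\exp(\Omega)=\sum_{n\ge0}\tfrac1{n!}\pun^{\,n}$ in $\UPLh$, one has $\exp_*(\phi(\Omega))=\rho(\exp(\Omega))$ where $\rho:\UPLh\to\dendh^+$ is the algebra map extending $\phi$, and the problem reduces to computing $\rho$ on the disjoint-vertex forests. This is closer to what the cited references do, and it avoids the circularity above.
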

\begin{proof}
  This was proved in \cite{ronco1,ronco2,Chap_CMMQ}.
\end{proof}

\begin{proposition}
  \label{image_fork}
  The image of $\sum_{\ell\geq 0} \sum_{n\geq 0}
  \frac{(-1)^\ell}{n!}\fork^\natural_{\ell,n}$ by $\phi$ is
  \begin{equation}
    (1+R) * \dun^\natural * (1-\su{L}),
  \end{equation}
  where $\dun^\natural$ is the planar binary tree $\dun$ with vertex
  labeled by $\natural$.
\end{proposition}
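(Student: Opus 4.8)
The plan is to factor the forks via $\fork^\natural_{\ell,n}=\linear^\flat_{\ell+1}\circ_\flat\corol^\natural_{n+1}$, to reduce the statement to the image of the corolla series (Lemma \ref{image_corol}) and to the equation $B^\flat=\dun^\flat+B^\flat\succ\dun-\dun\prec B^\flat$ of Lemma \ref{image_linear}, and then to verify a single identity in $\dendh$ by a direct computation with the dendriform axioms.

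I first rewrite the left-hand side. From $\linear^\flat=\pun^\flat+\pun\pl\linear^\flat$ one gets $\linear^\flat_{\ell+1}\circ_\flat X=(\pun\pl)^\ell X$, so the fork series equals $\sum_{\ell\geq 0}(-1)^\ell(\pun\pl)^\ell\bigl(\sum_{n\geq 0}\frac{1}{n!}\corol^\natural_{n+1}\bigr)$. Since $\phi$ is a morphism of pre-Lie algebras, $\phi(\pun\pl u)=D(\phi(u))$ with $D(v):=v\succ\dun-\dun\prec v$, hence $\phi\bigl((\pun\pl)^\ell u\bigr)=D^\ell(\phi(u))$. By Lemma \ref{image_corol} (with the shift $n\mapsto n+1$) the corolla series has image $Z:=(1+R)\succ\dun^\natural\prec(1-\su{L})$, so the image of the fork series is $W:=\sum_{\ell\geq 0}(-1)^\ell D^\ell(Z)$. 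As $D$ raises the degree by one, this sum converges in $\dendh$ and $W$ is the unique element of $\dendh$ with
\begin{equation}
  \label{auxfork}
  W+W\succ\dun-\dun\prec W=(1+R)\succ\dun^\natural\prec(1-\su{L}).
\end{equation}

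It then remains to show that $V:=(1+R)*\dun^\natural*(1-\su{L})$ also satisfies (\ref{auxfork}); by the uniqueness above this forces $W=V$, which is the assertion. I will compute the two correction terms of (\ref{auxfork}) and an expansion of $V$ itself, using the axioms in the forms $(x*y)\succ z=x\succ(y\succ z)$, $x\prec(y*z)=(x\prec y)\prec z$ and $(x\succ y)\prec z=x\succ(y\prec z)$, together with $\su{L}=\dun-\su{L}\succ\dun$ (suspension of (\ref{defL})) and $R=\dun\prec(1+R)$ (equation (\ref{defR})). Two uses of axiom (\ref{axiom3}) and the identity $(1-\su{L})\succ\dun=\su{L}$ give $V\succ\dun=(1+R)\succ(\dun^\natural\succ\su{L})$; two uses of axiom (\ref{axiom1}) and $\dun\prec(1+R)=R$ give $\dun\prec V=(R\prec\dun^\natural)\prec(1-\su{L})$; and expanding the outer product in $V=\bigl((1+R)*\dun^\natural\bigr)*(1-\su{L})$ via $(1+R)*\dun^\natural=(1+R)\succ\dun^\natural+R\prec\dun^\natural$, one further use of axiom (\ref{axiom3}), and $w\prec 1=w$, yields
\begin{equation}
  V=(1+R)\succ\dun^\natural\prec(1-\su{L})+(R\prec\dun^\natural)\prec(1-\su{L})-(1+R)\succ(\dun^\natural\succ\su{L}).
\end{equation}
Since the last two summands are exactly $\dun\prec V$ and $-V\succ\dun$, we obtain $V+V\succ\dun-\dun\prec V=(1+R)\succ\dun^\natural\prec(1-\su{L})$, i.e. (\ref{auxfork}), as desired.

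The manipulations with the axioms are routine; the delicate point — the one I expect to cost the most care — is the bookkeeping of the adjoined unit $1$ inside $1+R$ and $1-\su{L}$, which must be arranged so that the forbidden symbols $\succ 1$ and $1\prec$ never occur. Concretely, one reads $u\succ(1-\su{L})$ as $-\,u\succ\su{L}$ (the positive-degree part of $1-\su{L}$ being $-\su{L}$) while keeping the $\prec 1$ terms, which are legal. With that done the final cancellation is exact. I do not expect to need Proposition \ref{inverseLR}, though it gives the pleasant reading of $V$ as a $*$-conjugate of $\dun^\natural$ by the mutually $*$-inverse elements $1+R$ and $1-\su{L}$.
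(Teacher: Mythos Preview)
Your proof is correct and follows essentially the same route as the paper's. Both arguments factor the fork series through $\linear^\flat\circ_\flat\corol^\natural$, use Lemma \ref{image_corol} for the corolla part, and reduce the claim to showing that $V=(1+R)*\dun^\natural*(1-\su{L})$ satisfies the fixed-point equation $V=(1+R)\succ\dun^\natural\prec(1-\su{L})+\dun\prec V-V\succ\dun$, which is your (\ref{auxfork}) and the paper's (\ref{defD}); the verification of that identity via the dendriform axioms, (\ref{defL}), and (\ref{defR}) is the same in substance, only your packaging via the explicit geometric series $W=\sum_{\ell\geq 0}(-1)^\ell D^\ell(Z)$ replaces the paper's use of the suspended relation $\su{B}^\flat=\dun^\flat-\su{B}^\flat\succ\dun+\dun\prec\su{B}^\flat$.
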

\begin{proof}
  Let $D=(1+R) * \dun^\natural * (1-\su{L})$. Let us first show that
  \begin{equation}
    \label{defD}
    D=(1+R) \succ \dun^\natural \prec (1-\su{L}) + \dun \prec D - D
    \succ \dun. 
  \end{equation}
  Expanding the $*$ product, one computes
  \begin{equation}
    D=((1+R)* \dun^\natural) \prec (1-\su{L}) - ((1+R)* \dun^\natural) \succ \su{L}.
  \end{equation}
  Then one gets, by expanding again, 
  \begin{equation}
    (1+R) \succ \dun^\natural \prec (1-\su{L}) + (R \prec \dun^\natural) \prec (1-\su{L}) - ((1+R)* \dun^\natural) \succ \su{L}.
  \end{equation}
  Using the dendriform axioms, this is
  \begin{equation}
    (1+R) \succ \dun^\natural \prec (1-\su{L}) + R \prec
    (\dun^\natural * (1-\su{L})) - ((1+R)* \dun^\natural) \succ \su{L}.
  \end{equation}
  Then by Eq. (\ref{defL}) and (\ref{defR}), this can be rewritten
  \begin{multline}
    (1+R) \succ \dun^\natural \prec (1-\su{L}) + (\dun \prec (1+R)) \prec
    (\dun^\natural * (1-\su{L})) \\
    - ((1+R)* \dun^\natural) \succ ((1-\su{L}) \succ \dun).    
  \end{multline}
  One gets, using the dendriform axioms,
  \begin{equation}
     (1+R) \succ \dun^\natural \prec (1-\su{L}) + \dun \prec ((1+R)*
    \dun^\natural * (1-\su{L})) - ((1+R)* \dun^\natural *
    (1-\su{L})) \succ \dun.       
  \end{equation}
  This proves the equation (\ref{defD}) for $D$.

  Let us show now that $D'=(\sum_{\ell\geq 1}(-1)^{\ell-1}\phi(\linear_\ell^\flat))
  \circ_\flat (\sum_{n\geq 1}\frac{1}{(n-1)!}\phi(\corol_n^\natural))$ does
  satisfy the same equation as $D$.

  By Lemma \ref{image_corol}, one has
  \begin{equation}
    D'=\su{B}^\flat \circ\flat ((1+R) \succ \dun^\natural \prec (1-\su{L})).
  \end{equation}
  By Lemma \ref{image_linear}, one has
  $\su{B}^\flat=\dun^\flat-\su{B}^\flat \succ \dun +\dun \prec
  \su{B}^\flat$, hence
  \begin{equation}
    D'=(1+R) \succ \dun^\natural \prec (1-\su{L})+ \dun \prec D' - D'
    \succ \dun. 
  \end{equation}

  By uniqueness of the solution $D$ of Eq. (\ref{defD}), one has
  $D=D'$, \textit{i.e.}
  \begin{equation}
    (1+R) * \dun^\natural * (1-\su{L}) = (\sum_{\ell\geq
      1}(-1)^{\ell-1}\phi(\linear_\ell^\flat)) \circ_\flat (\sum_{n\geq 1}\frac{1}{(n-1)!}\phi(\corol_n^\natural)).
  \end{equation}
  Therefore 
  \begin{equation}
    (1+R) * \dun^\natural * (1-\su{L})=\phi\left((\sum_{\ell\geq
      1}(-1)^{\ell-1}\linear_{\ell}^\flat) \circ_\flat (\sum_{n\geq 1}\frac{1}{(n-1)!}\corol_n^\natural)\right),
  \end{equation}
  which is exactly the expected image by $\phi$ of a sum over forks.
\end{proof}

One can now deduce a useful functional equation for the image of
$\Omega_q$ by $\phi$, using only the associative product $*$ of
$\dend$.

\begin{proposition}
  \label{eq_phi_omega}
  The series $\phi(\Omega_q)$ is the unique solution in $\dendh$ of
  \begin{equation}
    \phi(\Omega_q)=(1-\su{L})^{-1} * \phi(\Omega_q)[q] * (1-\su{L}) + (1-q)
     \sum_{\ell \geq 1} (-1)^{\ell-1} \phi(\linear_\ell).
  \end{equation}
\end{proposition}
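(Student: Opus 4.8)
The plan is to derive this functional equation by pushing the fork equation (\ref{fork_eq}) forward along the morphism $\phi$. Applying the continuous, graded, linear map $\phi$ to (\ref{fork_eq}) yields
\begin{equation}
  \phi(\Omega_q)=\phi\Bigl(\sum_{\ell\geq 0}\sum_{n \geq 0}\tfrac{(-1)^\ell}{n!}\,\fork^\natural_{\ell,n}\circ_\natural\Omega_q[q]\Bigr)+(1-q)\sum_{\ell\geq 1}(-1)^{\ell-1}\phi(\linear_{\ell}).
\end{equation}
Since $\phi$ comes from a morphism of operads it commutes with the operadic substitution $\circ_\natural$ at the marked vertex, and being graded it satisfies $\phi(\Omega_q[q])=\phi(\Omega_q)[q]$; hence the first term on the right equals $\phi\bigl(\sum_{\ell,n}\tfrac{(-1)^\ell}{n!}\fork^\natural_{\ell,n}\bigr)\circ_\natural\phi(\Omega_q)[q]$.

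Next I would invoke Proposition \ref{image_fork}, which identifies $\phi\bigl(\sum_{\ell,n}\tfrac{(-1)^\ell}{n!}\fork^\natural_{\ell,n}\bigr)$ with $(1+R)*\dun^\natural*(1-\su{L})$. Substituting $X=\phi(\Omega_q)[q]$ at the single vertex $\natural$ amounts to applying the unique dendriform endomorphism of $\dendh$ that fixes $\dun$ and sends $\dun^\natural$ to $X$; as $1+R$ and $1-\su{L}$ are built from $\dun$ alone and such an endomorphism preserves $\succ$, $\prec$ and therefore $*$, one obtains
\begin{equation}
  \bigl((1+R)*\dun^\natural*(1-\su{L})\bigr)\circ_\natural X=(1+R)*X*(1-\su{L}).
\end{equation}
Finally, Proposition \ref{inverseLR} gives $1+R=(1-\su{L})^{-1}$, so the right-hand side is exactly $(1-\su{L})^{-1}*\phi(\Omega_q)[q]*(1-\su{L})$, and $\phi(\Omega_q)$ satisfies the stated equation.

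For uniqueness, let $Z=\sum_{n\geq 1}Z_n$ be any solution in $\dendh$, with $Z_n$ homogeneous of degree $n$, and extract the component of degree $n$. Because the degree-$0$ parts of both $1-\su{L}$ and its $*$-inverse $1+R$ equal $1$, the only contribution involving $Z_n$ on the right-hand side in degree $n$ is $q^nZ_n$, coming from the middle factor with both outer factors taken in degree $0$. Thus the degree-$n$ equation reads $(1-q^n)Z_n=G_n(Z_1,\dots,Z_{n-1})+(1-q)(-1)^{n-1}\phi(\linear_n)$ for an explicit expression $G_n$ in the lower components. Since $1-q^n$ is invertible in $\QQ(q)$ for all $n\geq 1$, this determines the $Z_n$ recursively, starting from $Z_1=\dun$, which proves uniqueness.

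The delicate point is the substitution step of the second paragraph: one has to make sure that $\phi$ really intertwines the pre-Lie-operadic composition $\circ_\natural$ with the corresponding substitution in $\dend$, and that inserting a series at the unique $\natural$-marked vertex of $(1+R)*\dun^\natural*(1-\su{L})$ is compatible with the associative product $*$, so that it yields $(1+R)*X*(1-\su{L})$. Granting that, the rest is the formal bookkeeping above together with Propositions \ref{image_fork} and \ref{inverseLR}.
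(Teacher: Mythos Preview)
Your proof is correct and follows essentially the same route as the paper: apply $\phi$ to the fork equation (\ref{fork_eq}), invoke Proposition~\ref{image_fork} to identify the image of the sum over forks, then use Proposition~\ref{inverseLR} to rewrite $1+R$ as $(1-\su{L})^{-1}$. You are in fact more thorough than the paper, which does not spell out the compatibility of $\phi$ with operadic substitution nor give any argument for uniqueness; your recursion argument for uniqueness is a genuine addition.
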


\begin{proof}
  Let us start from Eq. (\ref{fork_eq}). By Prop. \ref{image_fork}, we
  know the image by $\phi$ of the sum over forks. One gets
  \begin{equation}
    \phi(\Omega_q)=((1+R) * \dun^\natural * (1-\su{L})) \circ_\natural \phi(\Omega_q)[q] + 
    (1-q)\sum_{\ell \geq 1} (-1)^{\ell-1} \phi(\linear_{\ell}).  
  \end{equation}
  Then one can used Prop. \ref{inverseLR} to replace $1+R$ by the
  inverse of $1-\su{L}$.
\end{proof}

\subsection{Explicit formula}

We will prove in this section that the image of $\Omega_q$ by $\phi$
coincides (in some sense) with a known family of Lie idempotents, and
has an explicit description using $q$-binomial coefficients, descents
and major indices of planar binary trees. To obtain this description,
we use a result on noncommutative symmetric functions. We refer to the
articles \cite{ncsf1,ncsf2,ncsf3} for background on this subject. We
will use the notations of \cite{ncsf2}.

The algebra $\nsym$ of noncommutative symmetric function is the free
unital associative algebra on generators $S_1,S_2,\dots$. It is a
graded algebra (with $S_i$ of degree $i$), with a basis $(S_I)_I$
indexed by compositions. There is another basis $(R_I)_I$ obtained from
the basis $(S_I)_I$ by M{\"o}bius inversion on compositions ordered by
refinement. By convention, $S_0$ is the unit of $\nsym$.

As $\nsym$ is free, there is a unique morphism $\theta$ from $\nsym$
to $\dend$ which maps $S_i$ to the left comb $L_i$ for each $i\geq 0$,
with the convention that $L_0$ is the unit of $\dend$. 

One can check that $\theta$ is the usual morphism from $\nsym$ to
$\dend$, considered for instance in \cite[\S 4.8]{pbt} and \cite{lodayronco}.

In $\nsym$, there are elements $\Psi_i$ for $i\geq 1$, uniquely
defined by the conditions
\begin{equation}
  n S_n = \sum_{i=0}^{n-1} S_i * \Psi_{n-i},
\end{equation}
for all $n\geq 1$.

\begin{proposition}
  \label{theta_psi}
  The image of $\Psi_i$ by $\theta$ is $\phi(\linear_i)$.
\end{proposition}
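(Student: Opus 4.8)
The plan is to apply the algebra morphism $\theta$ to the generating-series form of the defining relation of the $\Psi_i$ and to match the outcome with Proposition \ref{powersum}.

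First I would rewrite the family of relations $nS_n = \sum_{i=0}^{n-1} S_i * \Psi_{n-i}$, $n\geq 1$, as a single identity of graded series in $\nsym$. Summing over all $n\geq 1$ and reindexing the double sum (each homogeneous component being a finite sum, since $S_0$ contributes the unit) gives
\[
  \sum_{n\geq 1} n S_n = \left(\sum_{i\geq 0} S_i\right) * \left(\sum_{j\geq 1}\Psi_j\right).
\]
I would also record here that the recursion forces each $\Psi_j$ to be homogeneous of degree $j$: indeed $\Psi_1 = S_1$ and $\Psi_n = n S_n - \sum_{i=1}^{n-1} S_i * \Psi_{n-i}$, so homogeneity follows by induction.

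Next I apply $\theta$. Since $\theta$ is a morphism of associative algebras sending $S_i$ to $L_i$ and $S_0$ to the unit $1$ of $\dend$, the left-hand side becomes $\sum_{n\geq 1} n L_n = E$ and the right-hand side becomes $(1+L) * \theta\bigl(\sum_{j\geq 1}\Psi_j\bigr)$, so that $E = (1+L) * \theta\bigl(\sum_{j\geq 1}\Psi_j\bigr)$. On the other hand, Proposition \ref{powersum} asserts $E = (1+L) * B$ with $B = \phi\bigl(\sum_{n\geq 1}\linear_n\bigr)$. As $1+L$ has constant term $1$ with respect to $*$, it is invertible in $\dendh$; cancelling it yields $\theta\bigl(\sum_{j\geq 1}\Psi_j\bigr) = \phi\bigl(\sum_{n\geq 1}\linear_n\bigr)$.

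Finally, both $\theta$ and $\phi$ preserve the grading (the elements $S_i$, $\Psi_i$, $L_i$ and $\linear_i$ all have degree $i$), so the identity just obtained can be read off degree by degree, giving $\theta(\Psi_i) = \phi(\linear_i)$ for all $i\geq 1$. The only step needing a little care is the first one — turning the family of relations into one series identity and checking homogeneity of the $\Psi_j$ — but I do not expect any genuine obstacle: everything else is formal manipulation resting on Proposition \ref{powersum} and the invertibility of $1+L$ in $\dendh$.
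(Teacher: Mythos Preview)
Your proof is correct and follows essentially the same route as the paper: both apply $\theta$ to the generating-series identity for the $\Psi_i$, identify the result with $E=(1+L)*B$ from Proposition~\ref{powersum}, and cancel the invertible factor $1+L$ to conclude degree by degree. Your version is simply more explicit about homogeneity and invertibility, which the paper leaves implicit.
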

\begin{proof}
  This is a corollary of Prop. \ref{powersum}. Indeed, one has
  \begin{equation}
    1+L=\sum_{n\geq 0} \theta(S_n)\quad
    \text{and}\quad E=\sum_{n\geq 1} n\theta( S_n). 
  \end{equation}
  Therefore
  \begin{equation}
    B=\sum_{n \geq 1} \theta(\Psi_n).
  \end{equation}
\end{proof}

We need to introduce the following notations.

The leaves of a planar binary tree with $n$ vertices are labelled from
$0$ to $n$ from left to right. The leaves with labels different from
$0$ and $n$ are called \textbf{inner leaves}. A \textbf{descent} in a
planar binary tree $t$ is the label of an $\backslash$-oriented inner
leaf. The descent set $D(t)$ of $t$ is the set of its descents.

The number of descents of a planar binary tree $t$ will be denoted $d(t)$. It
satisfies $0\leq d(t) \leq n-1$ for a tree $t$ of degree $n$. 

The \textbf{major index} $\maj(t)$ of $t$ is the sum of its descents.
For example, Fig. \ref{descente} displays a planar binary tree with
descent set $\{2,4\}$ and major index $2+4=6$.

\begin{figure}
\begin{center}
\includegraphics[scale=0.35]{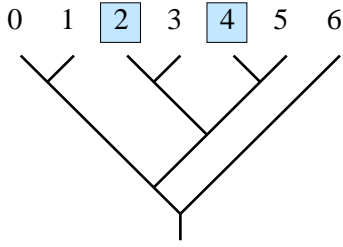} 
\caption{A planar binary tree $t$ with descents at $2$ and $4$.}
\label{descente}
\end{center}
\end{figure}

Let us recall that the descent set $D(I)$ corresponding to a composition
$I=(i_1,\dots,i_k)$ of $n$ is the set $\{i_1,i_1+i_2,\dots,i_1+\dots+i_k\}$.

\begin{proposition}
  \label{image_RI}
  The image by $\theta$ of $R_I$ is the sum
  \begin{equation}
    \sum_{\dessus{\# t=n}{ D(t)=D(I)}} t
  \end{equation}
  of all planar binary trees with $n$ vertices and descent set $D(I)$.
\end{proposition}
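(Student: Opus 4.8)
The plan is to derive the statement from the way $\theta$ is defined on the $S$-basis, via M\"obius inversion. In $\nsym$ the ribbon basis is characterised by
\[
  S^I \;=\; \sum_{D(J)\subseteq D(I)} R_J ,
\]
the sum running over all compositions $J$ of $n$ with $D(J)\subseteq D(I)$; equivalently $(S^I)$ and $(R_J)$ are related by a change of basis unitriangular for inclusion of descent sets, so that $R_I$ is recovered from the $S^J$ by M\"obius inversion over the Boolean lattice of subsets of $\{1,\dots,n-1\}$. Since $\theta$ is an algebra morphism with $\theta(S^I)=L_{i_1}*\cdots*L_{i_k}$, and writing provisionally $\rho_S=\sum_{D(t)=S}t$ for the sum of planar binary trees with $n$ vertices and descent set exactly $S$, the proposition becomes equivalent to the single identity
\[
  L_{i_1}*\cdots*L_{i_k} \;=\; \sum_{\substack{\#t=n\\ D(t)\subseteq D(I)}} t \;=\; \sum_{S\subseteq D(I)} \rho_S ,
\]
to be proved for every composition $I=(i_1,\dots,i_k)$ of $n$. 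Indeed, comparing it with $\theta(S^I)=\sum_{S\subseteq D(I)}\theta(R_{I(S)})$, where $I(S)$ denotes the composition with descent set $S$, one gets $\sum_{S\subseteq T}\bigl(\theta(R_{I(S)})-\rho_S\bigr)=0$ for every $T\subseteq\{1,\dots,n-1\}$, and M\"obius inversion then forces $\theta(R_{I(T)})=\rho_T$ for all $T$. (Note that this argument needs no injectivity of $\theta$.)

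Thus everything reduces to the displayed identity on products of left combs. This is exactly the explicit description of the standard morphism $\nsym\to\dend$ recorded in \cite[\S 4.8]{pbt} and \cite{lodayronco}, and I would be content to quote it from there. For a self-contained argument I would induct on the number $k$ of parts of $I$: for $k=1$ the assertion is that $L_n$ is the unique planar binary tree with $n$ vertices and empty descent set, which follows from the recursion (\ref{defL}); for the inductive step one writes $L_{i_1}*\cdots*L_{i_k}=(L_{i_1}*\cdots*L_{i_{k-1}})*L_{i_k}$, feeds in the induction hypothesis, and uses the combinatorial rule for right multiplication by a left comb in $\dend$: for a tree $s$ with $n'=i_1+\cdots+i_{k-1}$ vertices, $s*L_{i_k}$ is a sum of planar binary trees $t$ with $n$ vertices and $D(t)\subseteq D(s)\cup\{n'\}$, organised so that, as $s$ ranges over the trees with $D(s)\subseteq D\bigl((i_1,\dots,i_{k-1})\bigr)$, each $t$ with $D(t)\subseteq D(I)=D\bigl((i_1,\dots,i_{k-1})\bigr)\cup\{n'\}$ is produced exactly once.

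The main obstacle is precisely this last rule: pinning down how right multiplication by a left comb $L_p$ acts on the descent sets of planar binary trees and checking that the induced sum over the down-set of trees telescopes in the required way. In principle this can be unwound from the dendriform axioms by stripping one factor $\succ\dun$ at a time off $L_p$ (using $x\succ(y\succ z)=(x*y)\succ z$ and $x\prec(y*z)=(x\prec y)\prec z$) and tracking the effect of each grafting on the orientation of inner leaves; but since this is classical material on the Loday--Ronco algebra of planar binary trees, the efficient route is to cite \cite{pbt,lodayronco} for the identity and let the M\"obius inversion above finish the proof.
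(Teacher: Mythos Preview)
Your proposal is correct. The paper's own ``proof'' is a single sentence asserting that this is a well-known property of the injection of $\nsym$ into $\dend$ and gives no argument at all, so your treatment is strictly more informative than what appears in the paper.

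The route you take---reduce via M\"obius inversion to the identity $L_{i_1}*\cdots*L_{i_k}=\sum_{D(t)\subseteq D(I)} t$, then appeal to the standard description of the morphism $\nsym\to\dend$ from \cite{pbt,lodayronco}---is the natural unpacking of what ``well-known'' means here, and the M\"obius step is clean and correct. Your honest identification of the remaining obstacle (the combinatorial rule for $s*L_p$ and its effect on descent sets) is accurate: that rule is exactly the content of the Loday--Ronco product formula, and it is reasonable to cite it rather than re-derive it from the dendriform axioms. What your approach buys over the paper's bare citation is that it isolates precisely which classical fact is being invoked and shows that nothing beyond the $S^I\mapsto\sum_{D(t)\subseteq D(I)}t$ description of $\theta$ is needed; the paper simply declines to say anything.
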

\begin{proof}
  This is a well-known property of the injection of $\nsym$ in $\dend$.
\end{proof}

In \cite{ncsf2}, elements $\Psi_n(\frac{A}{1-q})$, for $n\geq 1$, are
defined by some ``change of alphabet'' applied to the elements
$\Psi_n$. According to the proof of \cite[Theorem 6.11]{ncsf2}, they
are characterized by
\begin{equation}
 \label{eqPsiAq}
  \sum_{n\geq 1} \Psi_n(\frac{A}{1-q})=\left(\sum_{n\geq 0}
    S_n\right)^{-1} \left(\sum_{n\geq 1} q^n \Psi_n(\frac{A}{1-q})\right)\left(\sum_{n\geq 0} S_n\right)+\sum_{n\geq 1} \Psi_n.
\end{equation}

There is a classical isomorphism $\alpha$ from $\nsym$ to the direct
sum of all descent algebras of symmetric groups. By this morphism, up
to a multiplicative constant, each $\Psi_n(\frac{A}{1-q})$ is mapped
to a Lie idempotent with coefficients in $\QQ(q)$ in the descent
algebra of the $n^{th}$ symmetric group.

We can now state the precise relation between $\Omega_q$ and these Lie
idempotents.
\begin{proposition}
  \label{prop_psi_omega}
  The image of $(1-q)\Psi(\frac{A}{1-q})$ by $\theta$ is $\phi(\su{\Omega_q})$.
\end{proposition}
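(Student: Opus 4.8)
The plan is to show that $\phi(\su{\Omega_q})$ satisfies the same characterizing equation \eqref{eqPsiAq} as $(1-q)\Psi(\frac{A}{1-q})$, after applying $\theta$. First I would translate \eqref{eqPsiAq} through $\theta$: using Proposition \ref{theta_psi} we have $\theta(\sum_{n\geq 1}\Psi_n)=B=\sum_{n\geq 1}\phi(\linear_n)$, and using the definition of $\theta$ on the generators $S_i$ we have $\theta(\sum_{n\geq 0}S_n)=1+L$. Hence, writing $X=\theta(\sum_{n\geq 1}\Psi_n(\frac{A}{1-q}))$, equation \eqref{eqPsiAq} becomes
\begin{equation}
  X=(1+L)^{-1} * X[q] * (1+L) + \sum_{n\geq 1}\phi(\linear_n),
\end{equation}
where $X[q]$ denotes the $q$-shift (the change of alphabet $\frac{A}{1-q}\mapsto$ multiply degree-$n$ part by $q^n$ corresponds exactly to the operator $[q]$ on graded series, by the very definition of that change of alphabet). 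Multiplying by $(1-q)$ and setting $Y=(1-q)X=\theta((1-q)\Psi(\frac{A}{1-q}))$, this reads
\begin{equation}
  Y=(1+L)^{-1} * Y[q] * (1+L) + (1-q)\sum_{n\geq 1}\phi(\linear_n).
\end{equation}

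Next I would compare this with the equation for $\phi(\Omega_q)$ from Proposition \ref{eq_phi_omega}, namely
\begin{equation}
  \phi(\Omega_q)=(1-\su{L})^{-1} * \phi(\Omega_q)[q] * (1-\su{L}) + (1-q)\sum_{\ell\geq 1}(-1)^{\ell-1}\phi(\linear_\ell).
\end{equation}
Applying the suspension $A\mapsto\su{A}$ to both sides: suspension is a graded ring anti-involution? — no, it is a ring automorphism of $\dendh$ with the $*$ product (it multiplies degree $n$ by $(-1)^{n-1}$, and degree is additive under $*$ only up to the sign $(-1)^{m+n-1}$ vs $(-1)^{m-1}(-1)^{n-1}=(-1)^{m+n-2}$, so in fact $\widetilde{a*b}=-\su{a}*\su{b}$ on homogeneous pieces). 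I must be careful here: the cleanest route is to observe that $\su{\cdot}$ intertwines the $q$-shift with the $(-q)$-shift up to the overall sign, and that $\su{1-\su{L}}=1-L$ while $\su{\sum_\ell(-1)^{\ell-1}\phi(\linear_\ell)}=\sum_\ell\phi(\linear_\ell)$. Working through the signs, applying $\su{\cdot}$ to the $\phi(\Omega_q)$-equation produces precisely the $Y$-equation above (with $Y=\phi(\su{\Omega_q})$), because the two stray signs from the anti-multiplicativity of $\su{\cdot}$ on the conjugation term cancel, and the inhomogeneous term matches directly. Alternatively, and perhaps more robustly, one can avoid sign bookkeeping by checking directly that $\phi(\su{\Omega_q})$ satisfies the $Y$-equation: unfold the suspension componentwise and match it against the recursion \eqref{quant_recursion} for $\Omega_{q,n}$ pushed through $\phi$.

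Finally, since \eqref{eqPsiAq} (equivalently the $Y$-equation) determines its solution uniquely by a degree-by-degree recursion — the operator $Z\mapsto Z-(1+L)^{-1}*Z[q]*(1+L)$ acts on the degree-$n$ part as multiplication by $(1-q^n)$ plus lower-degree corrections, which is invertible over $\QQ(q)$ — the two series $\theta((1-q)\Psi(\frac{A}{1-q}))$ and $\phi(\su{\Omega_q})$ coincide. The main obstacle I anticipate is precisely the sign/degree bookkeeping when transporting the equation through the suspension and identifying the change of alphabet $\frac{A}{1-q}$ with the $q$-shift operator $[q]$; once those two identifications are pinned down the uniqueness argument is immediate. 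I would therefore spend the core of the proof carefully establishing that the defining property \eqref{eqPsiAq}, read in $\dendh$ via $\theta$, is the suspension of the equation in Proposition \ref{eq_phi_omega}.
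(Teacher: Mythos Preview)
Your proposal is correct and follows essentially the same route as the paper: translate the defining equation \eqref{eqPsiAq} for $(1-q)\Psi(\frac{A}{1-q})$ through $\theta$ using Proposition~\ref{theta_psi} and $\theta(\sum S_n)=1+L$, then show that $\phi(\su{\Omega_q})$ satisfies the same equation by applying the suspension to the equation of Proposition~\ref{eq_phi_omega}, and conclude by uniqueness. The paper's proof is terser and simply asserts the suspended equation without working through the sign bookkeeping you flag; your observation that the degree-sign automorphism $\sigma:a_n\mapsto(-1)^n a_n$ (so that $\su{A}=-\sigma(A)$ on positive-degree parts, $\sigma(1-\su{L})=1+L$, and $\sigma$ is genuinely $*$-multiplicative) is the clean way to carry this out, and once that is done the two signs in the conjugation term indeed cancel as you anticipated.
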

\begin{proof}
  Indeed, by Proposition \ref{eq_phi_omega}, one has
  \begin{equation}
    \sum_{n\geq 1}\phi(\su{\Omega_q}) =(1+L)^{-1} *(
    \phi(\su{\Omega_q})[q]) *(1+L) +(1-q) \sum_{n\geq 1} \phi(\linear_n).
  \end{equation}
  Then using Prop. \ref{theta_psi} and Eq. (\ref{eqPsiAq}), one gets that
  $\theta((1-q)\Psi(\frac{A}{1-q}))$ and $\phi(\su{\Omega_q})$ satisfy
  the same equation, hence they are equal.
\end{proof}

Let $\Omega_{q,n}$ be the homogeneous
component of degree $n$ of $\Omega_q$.
\begin{proposition}
  \label{omega_dend}
  One has  
  \begin{equation}
    \phi(\Omega_{q,n})=\frac{(-1)^{n-1}}{[n]_q} \sum_{\# t=n} (-1)^{d(t)}
    \qbinom{n-1}{d(t)}^{-1} q^{\maj(t)-\binom{d(t)+1}{2}} \,t.
  \end{equation}
\end{proposition}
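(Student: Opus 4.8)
The plan is to read the statement off from Proposition \ref{prop_psi_omega}, which already identifies $\phi(\su{\Omega_q})$ with a noncommutative symmetric function, and then to transport this identity to the tree basis of $\dendh$ by means of Proposition \ref{image_RI}. Since $\phi(\su{\Omega_q})$ has degree $n$ component $(-1)^{n-1}\phi(\Omega_{q,n})$, what must be shown is that $\theta\big((1-q)\,\Psi_n(\tfrac{A}{1-q})\big)$ equals $(-1)^{n-1}$ times the right-hand side of the statement. First I would set up the dictionary between the two indexings. For a composition $I=(i_1,\dots,i_k)\models n$ one has $D(I)=\{i_1,\,i_1+i_2,\,\dots,\,i_1+\dots+i_{k-1}\}$, so every planar binary tree $t$ with $\#t=n$ and $D(t)=D(I)$ satisfies $d(t)=k-1=\ell(I)-1$, $\binom{d(t)+1}{2}=\binom{\ell(I)}{2}$, and $\maj(t)=\sum_{j\in D(I)}j=:\maj(I)$, where $\ell(I)$ denotes the number of parts of $I$. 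By Proposition \ref{image_RI}, $\theta(R_I)=\sum_{D(t)=D(I)}t$, and the sums $\theta(R_I)$ for $I\models n$ are supported on pairwise disjoint sets of trees, hence linearly independent; so $\theta$ is injective. Using $[n]_q(1-q)=1-q^{n}$ and grouping the degree $n$ trees according to their descent set, the statement therefore becomes equivalent to the following identity in $\nsym$:
\begin{equation}
  \label{ribexp}
  (1-q^{n})\,\Psi_n\left(\frac{A}{1-q}\right)=\sum_{I\models n}(-1)^{\ell(I)-1}\,\qbinom{n-1}{\ell(I)-1}^{-1}\,q^{\,\maj(I)-\binom{\ell(I)}{2}}\,R_I .
\end{equation}

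It then remains to prove (\ref{ribexp}), which I would do by induction on $n$ from the characterisation (\ref{eqPsiAq}) of the elements $\Psi_n(\tfrac{A}{1-q})$ (compare \cite[Theorem 6.11]{ncsf2}). Write $\sigma=\sum_{n\geq 0}S_n$, so that $\sigma^{-1}=\sum_{k\geq 0}(-1)^{k}R_{(1^{k})}$ (a standard fact from \cite{ncsf1}). Taking the degree $n$ part of (\ref{eqPsiAq}) and isolating the summand of $\sigma^{-1}\cdot(\text{middle})\cdot\sigma$ in which the whole degree is carried by the middle factor, namely $q^{n}\Psi_n(\tfrac{A}{1-q})$, one gets
\begin{equation*}
  (1-q^{n})\,\Psi_n\left(\frac{A}{1-q}\right)=\Psi_n+\sum_{\dessus{a+b+c=n}{a,c\geq 0,\ 1\leq b<n}}(-1)^{a}\,R_{(1^{a})}*\left(q^{b}\,\Psi_b\left(\frac{A}{1-q}\right)\right)*S_c ,
\end{equation*}
which both explains the denominator $1-q^{n}$ and makes the right-hand side depend only on lower-degree data. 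Substituting the induction hypothesis for $\Psi_b(\tfrac{A}{1-q})$ and expanding in the ribbon basis by repeated use of the multiplication rule $R_I*R_J=R_{I\cdot J}+R_{I\triangleright J}$ (concatenation and near-concatenation of compositions), one collects the coefficient of a fixed $R_I$, $I\models n$; using $\Psi_n=\sum_{j=1}^{n}(-1)^{j-1}R_{(1^{j-1},\,n-j+1)}$ for the remaining term, the identity to be checked becomes a $q$-binomial identity of Pascal type.

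I expect this last coefficient-matching to be the main obstacle: it is purely computational but delicate, since a given ribbon $R_I$ is produced in many ways by carving out a middle ribbon $R_J$ and padding it with a column on the left and a row on the right, and the various contributions must be summed and simplified by means of the $q$-Pascal relations $\qbinom{m}{j}=\qbinom{m-1}{j-1}+q^{j}\qbinom{m-1}{j}=q^{m-j}\qbinom{m-1}{j-1}+\qbinom{m-1}{j}$ (for small $n$ this collapses, e.g. to $\tfrac{2q^{2}}{1-q^{2}}-\tfrac{q}{1-q}=-\tfrac{q}{1+q}$ in degree $3$). As an alternative to working in $\nsym$, one may instead verify directly that the right-hand side of the statement satisfies the functional equation of Proposition \ref{eq_phi_omega} and invoke the uniqueness proved there; this avoids the passage through $\nsym$ but leads, via the dendriform products with the comb series $L$ and $R$, to the same $q$-binomial identity.
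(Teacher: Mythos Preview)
Your first paragraph is exactly the paper's proof: use Proposition~\ref{prop_psi_omega} to identify $(-1)^{n-1}\phi(\Omega_{q,n})$ with $\theta\big((1-q)\Psi_n(\tfrac{A}{1-q})\big)$, and then transport to the tree basis via Proposition~\ref{image_RI}, using the dictionary $d(t)=\ell(I)-1$, $\maj(t)=\maj(I)$ when $D(t)=D(I)$.

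The only difference is that the paper does not attempt to \emph{prove} the ribbon expansion (\ref{ribexp}); it simply quotes it as the content of \cite[Theorem~6.11]{ncsf2}. Everything from your second paragraph onward is therefore superfluous for the present statement: you are re-deriving an already established result in the literature, and, as you yourself note, you have not actually carried out the final coefficient-matching. If you want a self-contained argument, your inductive scheme via (\ref{eqPsiAq}) and the ribbon multiplication rule is a reasonable route, but it belongs to a proof of \cite[Theorem~6.11]{ncsf2} rather than to the proof of this proposition.
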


\begin{proof}
  The Theorem 6.11 of \cite{ncsf2} tells that the element
  $(1-q)\Psi_n(\frac{A}{1-q})$ is
  \begin{equation}
    \frac{1}{[n]_q} \sum_{|I|=n} (-1)^{d(I)}
    \qbinom{n-1}{d(I)}^{-1} q^{\maj(I)-\binom{d(I)+1}{2}} \, R_I.    
  \end{equation}
  By Prop. \ref{prop_psi_omega}, the image by $\theta$ of this formula is
  $(-1)^{n-1} \phi(\Omega_{q,n})$. By Prop. \ref{image_RI}, this becomes the
  expected formula.
\end{proof}

\section{Arithmetic properties}

In this section, we obtain some properties of the denominators in
$\Omega_q$ and consider what happens when $q$ is specialized to $1,0$
and $\infty$.

\subsection{$q=1$}

Let us first note that the morphism $\phi$ from $\PLh$ to the
completed free dendriform algebra $\dendh$ is defined over $\QQ$ and
injective. Hence one can deduce results on $\Omega_q$ from results on
its image by $\phi$.

\begin{proposition}
  The series $\Omega_q$ is regular at $q=1$ and $\Omega_{q=1}=\Omega$.
\end{proposition}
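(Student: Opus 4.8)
The plan is to treat the two assertions separately. The first --- that $q=1$, despite being a root of unity, is \emph{not} a pole of $\Omega_q$ --- I would read off the closed formula of Proposition \ref{omega_dend}, which writes $\phi(\Omega_{q,n})$ as a combination of planar binary trees with coefficients of the shape $\frac{(-1)^{n-1}}{[n]_q}(-1)^{d(t)}\qbinom{n-1}{d(t)}^{-1}q^{\maj(t)-\binom{d(t)+1}{2}}$. Here $[n]_q=1+q+\dots+q^{n-1}$ takes the nonzero value $n$ at $q=1$; the Gaussian binomial $\qbinom{n-1}{d(t)}$ is a polynomial specializing to the nonzero ordinary binomial $\binom{n-1}{d(t)}$ at $q=1$; and the remaining factor is a monomial in $q$, hence has no pole at $q=1$. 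So every coefficient of $\phi(\Omega_{q,n})$ is regular at $q=1$. Since $\phi$ is injective and defined over $\QQ$, the coefficients of $\Omega_{q,n}$ in the rooted-tree basis are fixed $\QQ$-linear combinations of those of $\phi(\Omega_{q,n})$, hence are regular at $q=1$ as well. Thus $\Omega_q$ specializes to a well-defined element $\Omega_{q=1}\in\PLh_\QQ$, which is nonzero because its degree-$1$ component is $\pun$.

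For the value I would then set $q=1$ in the functional equation (\ref{quant_eq2}). All of its terms are regular at $q=1$; the $q$-shift $A[q]$ becomes the identity there, so $\Omega_q[q]$ specializes to $\Omega_{q=1}$; and $(q-1)\pun$ vanishes. The equation becomes $\sum_{n\geq 1}\frac{1}{(n-1)!}\corol^\natural_{n}\circ_\natural\Omega_{q=1}-\Omega_{q=1}=\pun\pl\Omega_{q=1}$. Since $\corol^\natural_1=\pun^\natural$, the $n=1$ summand is just $\Omega_{q=1}$, and it cancels the lone term $-\Omega_{q=1}$, leaving
\[
  \sum_{n\geq 2}\frac{1}{(n-1)!}\,\corol^\natural_{n}\circ_\natural \Omega_{q=1}=\pun\pl\Omega_{q=1}.
\]
This is exactly the equation of which, by the Proposition following (\ref{expo_coro}), $\Omega$ is the unique nonzero solution in $\PLh_\QQ$. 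As $\Omega_{q=1}$ is a nonzero element of $\PLh_\QQ$ satisfying it, one concludes $\Omega_{q=1}=\Omega$.

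The genuinely delicate point is the regularity at $q=1$. Note that the recursion (\ref{quant_recursion}) does not by itself settle it: dividing by $q^n-1$ shows only that $\Omega_{q,n}$ has poles at roots of unity, and excluding $q=1$ that way would require simultaneously controlling the specializations $\Omega_{q,j}|_{q=1}$, which is circular with the value statement. The closed formula of Proposition \ref{omega_dend} is what breaks the circle, so the one thing to double-check is that $[n]_q$ and the Gaussian binomials really are the only denominators occurring in it. Everything after that --- the specialization of (\ref{quant_eq2}) and the appeal to the uniqueness of $\Omega$ --- is routine. (Specializing (\ref{quant_eq}) instead would give the equation $\Omega_{q=1}\pl(\exp(\Omega)-1)=\pun\pl\Omega_{q=1}$, which is \emph{not} directly covered by Proposition \ref{uniqueness2} because there $\Omega$ occurs inside the exponential as well; so using the corolla form (\ref{quant_eq2}) is the clean route.)
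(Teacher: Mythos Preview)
Your proof is correct and follows the paper's line almost exactly: regularity at $q=1$ via the explicit formula of Proposition~\ref{omega_dend} together with injectivity of $\phi$ over $\QQ$, then the value by specializing the defining equation and invoking a uniqueness statement for $\Omega$.

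The only difference is which equation you specialize. The paper sets $q=1$ directly in (\ref{quant_eq}) and appeals to Proposition~\ref{uniqueness2}; you specialize (\ref{quant_eq2}) instead and appeal to the corolla-form uniqueness Proposition following (\ref{expo_coro}). Your observation is well taken: (\ref{quant_eq}) at $q=1$ reads $\Omega_{q=1}\pl(\exp(\Omega)-1)=\pun\pl\Omega_{q=1}$, with the \emph{classical} $\Omega$ inside the exponential, so it is not literally the equation $X\pl(\exp(X)-1)=\pun\pl X$ of Proposition~\ref{uniqueness2}. The paper's step is nonetheless valid because, by Proposition~\ref{prop_expomega}, $X\pl(\exp(\Omega)-1)=\sum_{n\ge 2}\frac{1}{(n-1)!}\corol^\natural_n\circ_\natural X$ for \emph{any} $X$, so the specialized (\ref{quant_eq}) is exactly the corolla equation---which is the route you take explicitly. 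In short, the two arguments are the same up to this rewriting; yours makes the passage transparent where the paper leaves it implicit.
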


\begin{proof}
  By Proposition \ref{omega_dend}, the image $\phi(\Omega_q)$ is
  regular at $q=1$, as $q$-binomial coefficients become usual binomial
  coefficients when $q=1$. Therefore $\Omega_q$ itself is regular at
  $q=1$.

  At $q=1$, the equation (\ref{quant_eq}) becomes the equation
  (\ref{class_eq3}). By uniqueness in Proposition \ref{uniqueness2},
  the value of $\Omega_q$ at $q=1$ is $\Omega$.
\end{proof}

Remark: knowing that $\Omega_{q=1}=\Omega$, one can use equation
(\ref{quant_recursion}) to compute simultaneously $\Omega_q$ and
$\Omega$ up to order $n$ in a $O(n^3)$ number of pre-Lie operations.

\smallskip

There is a lot of cancellations in the coefficients of $\Omega_q$,
leading to a reduced complexity of the denominators. Note that the
expected denominator of $\Omega_{q,n}$ (from recursion
(\ref{quant_recursion})) is the product $\prod_{d=2}^{n}(q^d-1)$. Let
$\Phi_d$ be the $d^{th}$ cyclotomic polynomial.

\begin{proposition}
  The common denominator of the coefficients of the element
  $\Omega_{q,n}$ divides the product $\prod_{d=2}^{n} \Phi_d$.
\end{proposition}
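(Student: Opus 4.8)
The plan is to read the statement off the explicit formula of Proposition \ref{omega_dend}. Since the morphism $\phi$ is defined over $\QQ$ and injective, a $\QQ$-linear retraction of $\phi$ in degree $n$ expresses each coefficient of $\Omega_{q,n}$ in the rooted-tree basis as a $\QQ$-linear combination, with coefficients free of $q$, of the coefficients of $\phi(\Omega_{q,n})$ in the planar-binary-tree basis; so it suffices to bound the denominator of each of the latter, namely of
\[
c_t(q)=\frac{(-1)^{\,n-1+d(t)}\,q^{\,\maj(t)-\binom{d(t)+1}{2}}}{[n]_q\,\qbinom{n-1}{d(t)}}
\]
for $t$ a planar binary tree with $n$ vertices.

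First I would write both factors of the denominator as products of cyclotomic polynomials. From $q^n-1=\prod_{d\mid n}\Phi_d$ and $\Phi_1=q-1$ one gets $[n]_q=\prod_{d\mid n,\,d>1}\Phi_d$. From $[m]_q!=\prod_{d\ge 2}\Phi_d^{\lfloor m/d\rfloor}$ one gets that $\qbinom{n-1}{d(t)}$ is a genuine polynomial in which $\Phi_d$ occurs with multiplicity $a_d=\lfloor (n-1)/d\rfloor-\lfloor d(t)/d\rfloor-\lfloor (n-1-d(t))/d\rfloor$, and $a_d\in\{0,1\}$ for every $d\ge 2$ (and $a_1=0$). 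I would also note that the exponent $\maj(t)-\binom{d(t)+1}{2}$ is nonnegative, since the $d(t)$ descents of $t$ are distinct elements of $\{1,\dots,n-1\}$, whence $\maj(t)\ge 1+2+\dots+d(t)$; thus the numerator of $c_t(q)$ is a monomial in $q$, coprime to every $\Phi_d$.

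The crux is that $[n]_q$ and $\qbinom{n-1}{d(t)}$ are never simultaneously divisible by the same $\Phi_d$. Indeed, suppose $d\mid n$ and $d\ge 2$, so $n-1\equiv d-1\pmod d$. Writing $d(t)=ud+r_1$ and $n-1-d(t)=vd+r_2$ with $0\le r_1,r_2\le d-1$, the sum $r_1+r_2$ is congruent to $d-1$ modulo $d$ and lies in $[0,2d-2]$, hence equals exactly $d-1$; so there is no carry and $a_d=0$. Consequently, for $d\ge 2$ the multiplicity of $\Phi_d$ in the denominator $[n]_q\,\qbinom{n-1}{d(t)}$ is $1$ if $d\mid n$ (from $[n]_q$ alone) and is $a_d\le 1$ if $d\nmid n$ (from the $q$-binomial alone), while $\Phi_1$ and $\Phi_d$ with $d>n$ do not occur. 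As the numerator contributes nothing, the reduced denominator of $c_t(q)$ divides the squarefree product $\prod_{d=2}^{n}\Phi_d$. Since $t$ is arbitrary and that product is squarefree, the common denominator of the coefficients of $\phi(\Omega_{q,n})$, hence of $\Omega_{q,n}$, divides $\prod_{d=2}^{n}\Phi_d$.

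The only step that is not bookkeeping is the no-carry observation giving $a_d=0$ whenever $d\mid n$; without it the naive estimate would allow $\Phi_d^{\,2}$ in the denominator for such $d$, and it is precisely this collision that the proposition rules out.
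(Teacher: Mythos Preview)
Your proof is correct and follows the same route as the paper: use the explicit formula of Proposition~\ref{omega_dend} to bound the denominators of $\phi(\Omega_{q,n})$, and transfer back to $\Omega_{q,n}$ via the injectivity of $\phi$ over~$\QQ$. Where the paper simply cites an external reference for the fact that $q$-binomials have only simple roots at roots of unity, you give a self-contained cyclotomic computation; your no-carry observation (that $\Phi_d$ cannot divide both $[n]_q$ and $\qbinom{n-1}{d(t)}$) makes explicit the step that keeps the product $[n]_q\,\qbinom{n-1}{d(t)}$ squarefree, a point the paper's one-line proof leaves to the reader or to the cited source.
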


\begin{proof}
  For the image of $\Omega_q$ by $\phi$, this follows from Prop.
  \ref{omega_dend} and a simple property of the $q$-binomial
  coefficients: their only roots are simple roots at roots of unity,
  see \cite[Prop 2.2]{Guo_zeng}. This implies the same result for
  $\Omega_q$.
\end{proof}

\subsection{$q=0$}

Let us consider now what happens when $q=0$. Then $\Omega_0$ is
well-defined, $\Omega_q[q]$
vanishes and the equation (\ref{quant_eq}) becomes
\begin{equation}
  \Omega_0 = \pun - \pun \pl \Omega_0.
\end{equation}
It follows that $\Omega_0$ is the alternating sum of linear trees.

\subsection{$q=\infty$}

Let us now consider what happens when $q=\infty$. Let $\omega_{q,T}$
be the coefficient of the rooted tree $T$ in the expansion of
$\Omega_q$. We will call \textbf{valuation} at $q=\infty$ the smallest
exponent in the formal Laurent expansion in powers of $q^{-1}$ of an
element of $\QQ(q)$.

\begin{proposition}
  The valuation of $\omega_{q,T}$ at $q=\infty$ is at least $\# T-1$.
\end{proposition}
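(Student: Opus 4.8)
The plan is to pass to the homogeneous components and induct on the degree, using the recursion (\ref{quant_recursion}). Write $v$ for the valuation at $q=\infty$ on $\QQ(q)$ (minus the degree, when viewed as a rational function), and extend it to $\PLh_{\QQ(q)}$ by $v(A)=\min_T v(a_T)$ for $A=\sum_T a_T\,T$. Since $\Omega_{q,n}=\sum_{\#T=n}\omega_{q,T}\,T$, one has $v(\omega_{q,T})\ge v(\Omega_{q,\#T})$, so it suffices to prove $v(\Omega_{q,n})\ge n-1$ for all $n\ge 1$. I will use only three elementary facts: $v$ is a valuation on $\QQ(q)$; every operation appearing on the right of (\ref{quant_recursion}) — the pre-Lie product $\pl$, multiplication by a scalar of $\QQ(q)$, and $\QQ$-linear combination — produces coefficients that are $\QQ(q)$-combinations of products of coefficients of the inputs, so that $v$ can only increase (in particular $v(A\pl B)\ge v(A)+v(B)$ and $v(\pun\pl A)\ge v(A)$); and $v(q^n-1)=-n$, hence $v\!\left(\tfrac{1}{q^n-1}\right)=n$, while the classical components $\Omega_m$ have coefficients in $\QQ$, so $v(\Omega_m)\ge 0$.

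For the induction, the base case is $\Omega_{q,1}=\pun$, with $v(\Omega_{q,1})=0=1-1$. Assume $v(\Omega_{q,m})\ge m-1$ for all $m<n$. Solving (\ref{quant_recursion}) expresses $\Omega_{q,n}$ as $\tfrac{1}{q^n-1}$ times
\[
  \pun\pl\Omega_{q,n-1}-\sum_{k\ge 1}\frac{1}{k!}\sum_{\substack{m_1,\dots,m_k\ge 1,\ \ell\ge 1\\ m_1+\dots+m_k+\ell=n}} q^{\ell}\,((\Omega_{q,\ell}\pl\Omega_{m_k})\cdots)\pl\Omega_{m_1}.
\]
The first summand has valuation $\ge v(\Omega_{q,n-1})\ge n-2$. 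In each term of the double sum the constraints force $1\le\ell\le n-1$, so the inductive hypothesis gives $v(\Omega_{q,\ell})\ge\ell-1$; since $v(\Omega_{m_i})\ge 0$, the iterated pre-Lie product has valuation $\ge\ell-1$, and after multiplying by $q^{\ell}$ its valuation is $\ge -1$. Hence the displayed element has valuation $\ge\min(n-2,-1)=-1$ for $n\ge 2$, and multiplying by $\tfrac{1}{q^n-1}$, whose valuation is $n$, yields $v(\Omega_{q,n})\ge n-1$. This completes the induction and hence the proof. I do not expect a genuine obstacle: the only care needed is the bookkeeping of $v$ under $\pl$ and the operadic compositions, which is purely formal because none of those operations creates a denominator.

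As a cross-check, and to obtain more (the leading term), one can instead argue from Proposition \ref{omega_dend}: since $\phi$ is $\QQ$-linear and injective, $v(\Omega_{q,n})$ is bounded below by the minimal valuation of the coefficients of $\phi(\Omega_{q,n})$ in the planar-binary-tree basis, which, using that $[n]_q$ and $\qbinom{n-1}{d}$ have degrees $n-1$ and $d(n-1-d)$, equals
\[
  \min_{\#t=n}\Bigl((n-1)+d(t)\bigl(n-1-d(t)\bigr)+\binom{d(t)+1}{2}-\maj(t)\Bigr).
\]
A tree with $n$ vertices and $d$ descents satisfies $\maj(t)\le (n-1)+(n-2)+\dots+(n-d)=dn-\binom{d+1}{2}$, and since $d(n-1-d)+2\binom{d+1}{2}=dn$, each bracket is $\ge n-1$, with equality exactly when $D(t)=\{n-d,\dots,n-1\}$. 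This reproves the statement and moreover pins down the coefficient of $q^{-(n-1)}$ in $\Omega_{q,n}$.
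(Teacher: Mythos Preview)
Your primary argument is exactly the paper's: induction on $n$ via the recursion (\ref{quant_recursion}), bounding the first piece by $n-2$ and each term of the double sum by $-1$, then gaining $n$ from the factor $(q^n-1)^{-1}$. Your cross-check through Proposition~\ref{omega_dend} is a genuine alternative not used in the paper; it trades the recursive bookkeeping for an explicit closed formula in $\dendh$, and the injectivity of $\phi$ over $\QQ$ indeed transfers the valuation bound back to $\PLh$ (both directions of the inequality hold, so in fact $v(\Omega_{q,n})=v(\phi(\Omega_{q,n}))$). This second route has the advantage of identifying the extremal trees---those with $D(t)=\{n-d,\dots,n-1\}$---and hence, after pulling back through $\phi^{-1}$, the leading term of $\Omega_{q,n}$ at $q=\infty$, which the recursive proof does not give directly; the paper instead extracts the limit $\Omega_\infty$ by a separate argument from the functional equation.
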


\begin{proof}
  This will follow from the recursion (\ref{quant_recursion}). This is
  true in degree $n=1$. Let us assume that $n\geq 2$. Then the
  valuation of $\pun \pl \Omega_{q,n-1}$ is at least $n-2$ by
  induction and the valuation of each term of the rightmost sum in
  equation (\ref{quant_recursion}) is at least $-1$. Hence the
  valuation of $\Omega_{q,n}$ is at least $n-1$.
\end{proof}

Hence there exists a limit $\Omega_\infty$ for $\Omega_q[q]/q$ when $q$ goes to
$\infty$ and the limit of $\Omega_q/q$ is zero.

The equation (\ref{quant_eq}), divided by $q$, becomes at $q=\infty$,
\begin{equation}
   \Omega_\infty  \pl \exp(\Omega) = \pun.
\end{equation}

By right action by $\exp(-\Omega)$, this is equivalent to
\begin{equation}
   \Omega_\infty = \pun \pl \exp(-\Omega).
\end{equation}

The element $\exp(-\Omega)$ is the inverse of $\exp(\Omega)$ in
$\UPLh$. This has been computed in \cite[\S 6.4]{ChapLive2}. More
precisely, the inverse of $\sum_{n \geq 1} \frac{1}{(n-1)!}\corol_{n}$
in the group of characters of the Connes-Kreimer Hopf algebra was
shown there to be
\begin{equation}
  \sum_T \frac{(-1)^{\#T-1}}{\aut(T)} T,
\end{equation}
where $\aut(T)$ is the cardinal of the automorphism group of the
rooted tree $T$. But it is known \cite{ChapLive1} that this group of
characters is isomorphic to the group of group-like elements in
$\UPLh$. Going through the isomorphism, one gets the following result.

\begin{proposition}
  The series $\Omega_\infty$ is given by
  \begin{equation}
     \Omega_\infty=\sum_T \frac{(-1)^{\#T-1}}{\aut(T)}T.
  \end{equation}
\end{proposition}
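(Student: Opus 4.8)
The identity to be proved is that $\Omega_\infty = \pun \pl \exp(-\Omega)$, where $\exp(-\Omega)$ denotes the inverse of $\exp(\Omega)$ in $\UPLh$, equals $\sum_T \frac{(-1)^{\#T-1}}{\aut(T)}T$. The immediate reduction, already set up in the text, is that $\Omega_\infty = \pun \pl \exp(-\Omega)$, so the content is to identify $\exp(-\Omega)$, the $\star$-inverse of $\exp(\Omega)$, with the appropriate group-like element of $\UPLh$, and then to compute the pre-Lie product of $\pun$ with it. By Proposition \ref{prop_expomega} we know $\exp(\Omega) = \sum_{n\geq 0}\frac{1}{n!}\,\pun\,\pun\dots\pun$ (the forest on $n$ isolated vertices), which is exactly the image, under the isomorphism of \cite{ChapLive1} between the group of characters of the Connes--Kreimer Hopf algebra and the group of group-like elements of $\UPLh$, of the character $\sum_{n\geq 1}\frac{1}{(n-1)!}\corol_n$ (interpreted multiplicatively in the Connes--Kreimer group, where the product is the convolution dual to the coproduct). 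So the first step is to make this correspondence precise and to invoke \cite[\S 6.4]{ChapLive2}, which computes the convolution-inverse of $\sum_{n\geq 1}\frac{1}{(n-1)!}\corol_n$ to be $\sum_T \frac{(-1)^{\#T-1}}{\aut(T)}T$.

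\textbf{Key steps, in order.} First, recall (or cite from \cite{ChapLive1}) the explicit isomorphism $\Theta$ between the group $G_{CK}$ of characters of the Connes--Kreimer Hopf algebra $H_{CK}$ and the group of group-like elements of $\UPLh$: a character $\gamma$ corresponds to the group-like element whose forest-expansion has the coefficient of a forest $F = T_1\cdots T_r$ equal (up to the relevant symmetry factors) to $\prod_i \gamma(T_i)$, the point being that group-like elements are precisely exponentials of primitives and that the forest basis of $\UPLh$ matches the monomial structure of $H_{CK}^*$. Second, check that $\exp(\Omega)$ corresponds under $\Theta$ to the character $e := \sum_{n\geq 1}\frac{1}{(n-1)!}\corol_n$: this is immediate from Proposition \ref{prop_expomega}, since the forest on $n$ isolated points with coefficient $1/n!$ is exactly $\Theta(e)$ written in the forest basis (the corolla $\corol_n$ has $\aut(\corol_n) = (n-1)!$, which is why the character value $1/(n-1)!$ on $\corol_n$ produces $1/n!$ after the grouplike/symmetry bookkeeping — one should double-check this normalization carefully). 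Third, invoke \cite[\S 6.4]{ChapLive2} for the convolution-inverse $e^{\star -1} = \sum_T \frac{(-1)^{\#T-1}}{\aut(T)}T$, and transport it through $\Theta$ to conclude that $\exp(-\Omega) = \Theta(e^{\star-1})$ is the group-like element whose forest expansion is determined by these tree-coefficients. Fourth, compute $\Omega_\infty = \pun \pl \exp(-\Omega)$: using Lemma \ref{pi_exp} (or rather the module structure diagram \eqref{action_diagramme}) one has $\pun \pl F = \pi(\pun \star F)$ for a forest $F$, and grafting $\pun$ onto a single root of each tree in the forest and then projecting onto trees kills all terms except those where $\pun$ is grafted onto one tree and that tree was the whole forest; one finds $\pi(\pun \star e^{\star -1})$ is again a series over trees with the same coefficients $\frac{(-1)^{\#T-1}}{\aut(T)}$ — here one uses that grafting a new root below a tree $T'$ to form $T$ with $\pun$ at the bottom relates $\aut(T)$ to $\aut(T')$ appropriately, or more cleanly, that $\pun \pl$ applied to a grouplike element built from a character $\gamma$ recovers the element $\sum_T \gamma(T)\,T$ of $\PLh$ (this is essentially the statement that $\pun \pl(-)$ inverts the "forgetful" passage from $\PLh$ to grouplike elements of $\UPLh$ in a way dual to the recursion $nS_n = \sum S_i \Psi_{n-i}$). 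In fact the cleanest route for this last step is: $\exp^* = \pun \pl ((\exp(\pun)-1)/\pun)$ and the equation $\pun \pl (\Omega/(\exp(\Omega)-1)) = \Omega$ from \eqref{class_eq1} show that $\pun \pl (-)$ sends the grouplike element $\exp(\Omega)$-related data back to $\Omega$; the analogous manipulation sends $\exp(-\Omega)$'s data to $\Omega_\infty$, and since $\exp(-\Omega)$'s tree-part coefficients are the $\frac{(-1)^{\#T-1}}{\aut(T)}$, so are $\Omega_\infty$'s.

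\textbf{Main obstacle.} The real work is bookkeeping: making the isomorphism $\Theta: G_{CK} \xrightarrow{\sim} \{\text{grouplike elements of }\UPLh\}$ completely explicit in the forest basis and tracking the symmetry factors $\aut(T)$ and $\aut(F) = \prod_i \aut(T_i) \cdot (\text{multiplicities!})$ through it, so that "convolution-inverse of the character $e$" translates correctly into "$\star$-inverse of $\exp(\Omega)$," and then verifying that the operation $\pun \pl(-)$ does not disturb the tree-coefficients $\frac{(-1)^{\#T-1}}{\aut(T)}$. The conceptual statements are all available (Proposition \ref{prop_expomega}, \cite{ChapLive1}, \cite[\S 6.4]{ChapLive2}); the danger is a normalization slip in the $1/\aut$ factors. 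I would handle this by pinning down the isomorphism on a few small trees (up to degree $3$) as a sanity check, then arguing the general case by degree. A secondary, purely cosmetic point: one must be slightly careful that $\exp(-\Omega)$ as "inverse of $\exp(\Omega)$ in $\UPLh$" coincides with "$\exp$ of the Lie element $-\Omega$," which is clear since $\exp$ is a group homomorphism from $(\PLh,+)$-primitives to grouplike elements, but it is worth stating explicitly so the reader sees why \cite[\S 6.4]{ChapLive2} — which inverts a \emph{character}, i.e. works on the $G_{CK}$ side — applies.
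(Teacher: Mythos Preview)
Your proposal is correct and follows essentially the same route as the paper: reduce to $\Omega_\infty=\pun\pl\exp(-\Omega)$, identify $\exp(\Omega)$ with the character $\sum_{n\geq 1}\frac{1}{(n-1)!}\corol_n$ via the isomorphism of \cite{ChapLive1} between $G_{CK}$ and the group-like elements of $\UPLh$, invoke \cite[\S 6.4]{ChapLive2} for its convolution-inverse, and transport back. The paper compresses all of this into the phrase ``going through the isomorphism''; you spell out the bookkeeping it suppresses.

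One clarification will streamline your step~4. You treat ``apply $\pun\pl(-)$'' as an extra operation to be analyzed after transporting through $\Theta$, and worry whether it disturbs the coefficients. In fact $\pun\pl(-)$ \emph{is} the isomorphism: the module map $\psi$ of diagram~(\ref{action_diagramme}) sends the forest $F$ obtained by deleting the root of a tree $T$ to $T$ itself (so $\pun\pl F_n=\corol_{n+1}$ for the forest $F_n$ of $n$ isolated vertices, consistent with Proposition~\ref{prop_expomega}), and this is precisely the identification of group-like elements with characters written as $\sum_T \gamma(T)\,T$. Once you state this, your ``main obstacle'' disappears: there is no separate grafting computation and no $\aut(T)/\aut(T')$ comparison to make, because $\pun\pl\exp(-\Omega)$ is by definition the image of $\exp(-\Omega)$ under the isomorphism, hence literally equal to the series $\sum_T\frac{(-1)^{\#T-1}}{\aut(T)}T$ that names the inverse character. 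Your ``or more cleanly'' remark already gestures at this; promoting it to the main argument removes the detour through Lemma~\ref{pi_exp} and the tentative grafting picture.
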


\section{Morphisms and images}

In this section, we consider two quotients of the free pre-Lie algebra
$\PL$ and the images of $\Omega_q$ in these quotients. We will use
some results of \cite{Chap_2002}.

\subsection{Morphism to the free associative algebra}

Consider the free (non-unital) associative algebra on one generator
$x$, denoted by $\QQ[x]_+$. As the associative product is also a
pre-Lie product, there exists a unique morphism of pre-Lie algebras
from $\PL$ to $\QQ[x]_+$ sending $\pun$ to $x$. This extends uniquely
to a morphism from $\PLh$ to the algebra $\QQ[[x]]_+$ of formal power
series in $x$ without constant term.

One can show that this morphism send the linear trees $\linear_n$ with
$n$ vertices to the monomials $x^n$ and all others trees to $0$.

It is known (see \cite{Chap_2002}) that the image of $\Omega$ is the formal power series
\begin{equation}
  \log(1+x)=\sum_{n \geq 1} \frac{(-1)^{n-1}}{n} x^n.
\end{equation}
Therefore the image of $\exp(\Omega)-1$ is just $x$. Note that the
right action is mapped to the product.

One deduces from Eq. (\ref{quant_eq}) that the image of $\Omega_q$ is
 the $q$-logarithm defined by
\begin{equation}
 \log_q(x)=\sum_{n\geq 1} \frac{(-1)^{n-1}}{[n]_q}x^n,
\end{equation}
which is the unique solution to the functional equation
\begin{equation}
  x \log_q(qx)=x \log_q(x)+(q-1)\,x-\log_q(qx)+\log_q(x).
\end{equation}

\subsection{Morphism for corollas}

As shown in \cite{Chap_2002}, the subspace of $\PL$ spanned by trees
that are not corollas is a two-sided pre-Lie ideal.

The quotient pre-Lie algebra is isomorphic to the following pre-Lie
algebra. Let us identify the image of the corolla $\corol_{n+1}$ with
$n$ leaves to $x^n$ for all $n\geq 0$. In particular, the tree $\pun$
is mapped to $1$. The underlying vector space is therefore identified
with $\QQ[x]$ and the pre-Lie product is
\begin{equation}
  x^p \pl x^q=
  \begin{cases}
    x^{p+1} \quad \text{if}q=0,\\
    0    \quad\text{else.}
  \end{cases}
\end{equation}

It is known (see \cite{Chap_2002}) that the image of $\Omega$ is the
generating function $\frac{x}{\exp(x)-1}$ for the Bernoulli numbers.

One can also show (using the description of the quotient $\pl$ product
given above) that the right action of the image of $\exp(\Omega)-1$ is
just given by the product by $\exp(x)-1$ and the right action by the
image of $\Omega_q$ is given by the product by $x$.

Then, from Eq. (\ref{quant_eq}), one gets that the image $F_q(x)$ of
$\Omega_q$ satisfies the following equation
\begin{equation}
  (\exp(x)-1)[q F_q(qx)]=x+q-1-q F_q(qx)+F_q(x).
\end{equation}

This functional equation is known (see for instance \cite{satoh}) to
describe the generating function
\begin{equation}
  F=\sum_{n \geq 0} \beta_n(q) \frac{x^n}{n!},
\end{equation}
where $\beta_q(n)$ are the $q$-Bernoulli numbers introduced by Carlitz
in 1948, see \cite{Carlitz_1948,Carlitz_1954,Carlitz_1958}.

Therefore the coefficients of the corollas in $\Omega_q$ are the
$q$-Bernoulli numbers of Carlitz.

\subsection{Morphism to a pre-Lie algebra of vector fields}

There exists an interesting morphism from $\PL$ to a pre-Lie algebra
of vector fields. We describe it here only as a side remark, as the
image of $\Omega_q$ seems to have no special property.

Consider the the vector space $V=\QQ[x]_+$, endowed with the following
pre-Lie product:
\begin{equation}
  (f \pl g)=x f \, \partial_x g.
\end{equation}
Then there is a unique morphism from $\PL$ to $V$ sending $\pun$ to
$x$.

This map has the following nice property: the coefficient of $x^n$ in
the image of a series $A$ is the sum of the coefficients of the trees
in the homogeneous component $A_n$ of $A$. The proof is just a check
that this sum-of-coefficients map defines a morphism of pre-Lie
algebra from $\PL$ to $V$.

\section{First terms of some expansions}

\begin{multline}
\Omega=\arb{0}-\frac{1}{2}\arb{10}+\frac{1}{3} 
\arb{110}+\frac{1}{12}\arb{200} -\frac{1}{4}
\arb{1110}-\frac{1}{12}\arb{1200}-\frac{1}{12}\arb{2100} 
+\\
\frac{1}{5}\arb{11110}+\frac{3}{40}\arb{11200}+\frac{1}{10}\arb{12100}+\frac{1}{180}\arb{13000}+\frac{1}{60}\arb{21010}+\frac{1}{20}\arb{21100}+\frac{1}{120}\arb{22000}-\frac{1}{120}\arb{31000}-\frac{1}{720}\arb{40000}
+\,\cdots
\end{multline}

For all $n\geq1$, let $\Phi_n$ be the $n^{th}$ cyclotomic polynomial.

\begin{multline}
\Omega_q=\arb{0}-\frac{1}{\Phi_2}\arb{10}+\frac{1}{\Phi_3} 
\arb{110}+\frac{q}{2 \,\Phi_2 \Phi_3}\arb{200} \\
-\frac{1}{\Phi_2 \Phi_4}
\arb{1110}-\frac{q}{2\,\Phi_3 \Phi_4}\arb{1200}-\frac{q^2}{\Phi_2 \Phi_3
\Phi_4}\arb{2100} -\frac{q(q-1)}{6\, \Phi_2\Phi_3\Phi_4}\arb{3000}
+\\
\frac{1}{\Phi_5}\arb{11110}+\frac{q(1+q+q^2)}{2\,\Phi_2\Phi_4\Phi_5}\arb{11200}+\frac{q^2}{\Phi_4\Phi_5}\arb{12100}+\frac{q(q^3+q^2-1)}{6\,\Phi_3\Phi_4\Phi_5}\arb{13000}+\frac{q^4}{2\,\Phi_3\Phi_4\Phi_5}\arb{21010}+\frac{q^3}{\Phi_2\Phi_4\Phi_5}\arb{21100}+\\
\frac{q^2(q^3+q^2-1)}{2\,\Phi_2\Phi_3\Phi_4\Phi_5}\arb{22000}+\frac{q^2(q^3-q-1)}{2\,\Phi_2\Phi_3\Phi_4\Phi_5}\arb{31000}+\frac{q(q^4-q^3-2q^2-q+1)}{24\,
\Phi_2 \Phi_3\Phi_4\Phi_5}\arb{40000}
+\,\cdots
\end{multline} 

\begin{multline}
\Omega_\infty=\arb{0}-\arb{10}+ 
\arb{110}+\frac{1}{2}\arb{200}-
\arb{1110}-\frac{1}{2}\arb{1200}-\arb{2100}-\frac{1}{6}\arb{3000} 
+\\
\arb{11110}+\frac{1}{2}\arb{11200}+\arb{12100}+\frac{1}{6}\arb{13000}+\frac{1}{2}\arb{21010}+\arb{21100}+\\
\frac{1}{2}\arb{22000}+\frac{1}{2}\arb{31000}+\frac{1}{24}\arb{40000}
+\,\cdots
\end{multline}

\begin{multline}
 \Omega_0=\arb{0}-\arb{10}+\arb{110}-\arb{1110}+\arb{11110}
 +\,\cdots
 \end{multline}

\bibliographystyle{alpha}
\bibliography{qomega}

\newcommand{\etalchar}[1]{$^{#1}$}
\begin{thebibliography}{DKKT97}

\bibitem[Car48]{Carlitz_1948}
L.~Carlitz.
\newblock {$q$}-{B}ernoulli numbers and polynomials.
\newblock {\em Duke Math. J.}, 15:987--1000, 1948.

\bibitem[Car54]{Carlitz_1954}
L.~Carlitz.
\newblock {$q$}-{B}ernoulli and {E}ulerian numbers.
\newblock {\em Trans. Amer. Math. Soc.}, 76:332--350, 1954.

\bibitem[Car58]{Carlitz_1958}
L.~Carlitz.
\newblock Expansions of {$q$}-{B}ernoulli numbers.
\newblock {\em Duke Math. J.}, 25:355--364, 1958.

\bibitem[CEFM08]{calaque-2008}
D.~Calaque, K.~Ebrahimi-Fard, and D.~Manchon.
\newblock Two {H}opf algebras of trees interacting.
\newblock oai:arXiv.org:0806.2238, 2008.

\bibitem[Cha02a]{Chap_2002}
F.~Chapoton.
\newblock Rooted trees and an exponential-like series.
\newblock arXiv.org:math/0209104, 2002.

\bibitem[Cha02b]{Chap_CMMQ}
F.~Chapoton.
\newblock Un th\'eor\`eme de {C}artier-{M}ilnor-{M}oore-{Q}uillen pour les
  big\`ebres dendriformes et les alg\`ebres braces.
\newblock {\em J. Pure Appl. Algebra}, 168(1):1--18, 2002.

\bibitem[Cha07a]{mould}
F.~Chapoton.
\newblock The anticyclic operad of moulds.
\newblock {\em Int. Math. Res. Not. IMRN}, (20):Art. ID rnm078, 36, 2007.

\bibitem[Cha07b]{Chap_SLC}
F.~Chapoton.
\newblock Operads and algebraic combinatorics of trees.
\newblock {\em S{\'e}minaire Lotharingien de combinatoire}, 58, 2007.

\bibitem[CL01]{ChapLive1}
F.~Chapoton and M.~Livernet.
\newblock Pre-{L}ie algebras and the rooted trees operad.
\newblock {\em Internat. Math. Res. Notices}, (8):395--408, 2001.

\bibitem[CL07]{ChapLive2}
F.~Chapoton and M.~Livernet.
\newblock Relating two {H}opf algebras built from an operad.
\newblock {\em Int. Math. Res. Not. IMRN}, (24):Art. ID rnm131, 27, 2007.

\bibitem[DKKT97]{ncsf3}
G.~Duchamp, A.~Klyachko, D.~Krob, and J.-Y. Thibon.
\newblock Noncommutative symmetric functions. {III}. {D}eformations of {C}auchy
  and convolution algebras.
\newblock {\em Discrete Math. Theor. Comput. Sci.}, 1(1):159--216, 1997.
\newblock Lie computations (Marseille, 1994).

\bibitem[DKLT94]{Duchamp_et_al_1994}
G.~Duchamp, D.~Krob, B.~Leclerc, and J.-Y. Thibon.
\newblock D\'eformations de projecteurs de {L}ie.
\newblock {\em C. R. Acad. Sci. Paris S\'er. I Math.}, 319(9):909--914, 1994.

\bibitem[EFM08]{ebrahimi-fard08}
K.~Ebrahimi-Fard and D.~Manchon.
\newblock A {M}agnus- and {F}er-type formula in dendriform algebras.
\newblock {\em to appear in Foundations of Computational Mathematics}, 2008.
\newblock http://lanl.arxiv.org/abs/0707.0607.

\bibitem[GKL{\etalchar{+}}95]{ncsf1}
I.~M. Gelfand, D.~Krob, A.~Lascoux, B.~Leclerc, V.~S. Retakh, and J.-Y. Thibon.
\newblock Noncommutative symmetric functions.
\newblock {\em Adv. Math.}, 112(2):218--348, 1995.

\bibitem[GZ06]{Guo_zeng}
V.~J.~W. Guo and J.~Zeng.
\newblock Some arithmetic properties of the {$q$}-{E}uler numbers and
  {$q$}-{S}ali\'e numbers.
\newblock {\em European J. Combin.}, 27(6):884--895, 2006.

\bibitem[HNT05]{pbt}
F.~Hivert, J.-C. Novelli, and J.-Y. Thibon.
\newblock The algebra of binary search trees.
\newblock {\em Theoret. Comput. Sci.}, 339(1):129--165, 2005.

\bibitem[KLT97]{ncsf2}
D.~Krob, B.~Leclerc, and J.-Y. Thibon.
\newblock Noncommutative symmetric functions. {II}. {T}ransformations of
  alphabets.
\newblock {\em Internat. J. Algebra Comput.}, 7(2):181--264, 1997.

\bibitem[Lod01]{loday}
J.-L. Loday.
\newblock Dialgebras.
\newblock In {\em Dialgebras and related operads}, volume 1763 of {\em Lecture
  Notes in Math.}, pages 7--66. Springer, Berlin, 2001.

\bibitem[LR98]{lodayronco}
Jean-Louis Loday and Mar{\'{\i}}a~O. Ronco.
\newblock Hopf algebra of the planar binary trees.
\newblock {\em Adv. Math.}, 139(2):293--309, 1998.

\bibitem[Mur06]{Murua}
A.~Murua.
\newblock The {H}opf algebra of rooted trees, free {L}ie algebras, and {L}ie
  series.
\newblock {\em Found. Comput. Math.}, 6(4):387--426, 2006.

\bibitem[Ron00]{ronco2}
M.~Ronco.
\newblock Primitive elements in a free dendriform algebra.
\newblock In {\em New trends in Hopf algebra theory (La Falda, 1999)}, volume
  267 of {\em Contemp. Math.}, pages 245--263. Amer. Math. Soc., Providence,
  RI, 2000.

\bibitem[Ron01]{ronco1}
M.~Ronco.
\newblock A {M}ilnor-{M}oore theorem for dendriform {H}opf algebras.
\newblock {\em C. R. Acad. Sci. Paris S\'er. I Math.}, 332(2):109--114, 2001.

\bibitem[Sat89]{satoh}
Junya Satoh.
\newblock {$q$}-analogue of {R}iemann's {$\zeta$}-function and {$q$}-{E}uler
  numbers.
\newblock {\em J. Number Theory}, 31(3):346--362, 1989.

\bibitem[vdL03]{vdlaan}
P.~van~der Laan.
\newblock {\em Operads, Hopf algebras and coloured Koszul duality}.
\newblock PhD thesis, Utrecht University, 2003.

\bibitem[WZ03]{Wright_2003}
D.~Wright and W.~Zhao.
\newblock D-log and formal flow for analytic isomorphisms of {$n$}-space.
\newblock {\em Trans. Amer. Math. Soc.}, 355(8):3117--3141 (electronic), 2003.

\end{thebibliography}

\end{document}